\DeclareMathAlphabet{\mathbbold}{U}{bbold}{m}{n}
\def\k{\mathbbold{k}}
\DeclareSymbolFont{rsfscript}{OMS}{rsfs}{m}{n}
\DeclareSymbolFontAlphabet{\mathrsfs}{rsfscript}
\DeclareFontFamily{OMS}{rsfs}{\skewchar\font'177}
\DeclareFontShape{OMS}{rsfs}{m}{n}{%
      <5> rsfs5
      <6> <7> rsfs7
      <8> <9> <10> rsfs10
      <10.95> <12> <14.4> <17.28> <20.74> <24.88> rsfs10
      }{}
\def\calA{\mathrsfs{A}}
\def\calB{\mathrsfs{B}}
\def\calE{\mathrsfs{E}}
\def\calF{\mathrsfs{F}}
\def\calG{\mathrsfs{G}}
\def\calO{\mathrsfs{O}}
\def\calP{\mathrsfs{P}}
\def\calR{\mathrsfs{R}}
\def\calV{\mathrsfs{V}}
\def\calW{\mathrsfs{W}}
\DeclareMathOperator{\Hom}{Hom}
\DeclareMathOperator{\Det}{Det}
\DeclareMathOperator{\gr}{gr}
\DeclareMathOperator{\As}{As}
\DeclareMathOperator{\End}{End}
\DeclareMathOperator{\Vertices}{Vertices}
\DeclareMathOperator{\Int}{Internal}
\DeclareMathOperator{\RT}{RT}
\theoremstyle{plain}
\newtheorem {theorem}{Theorem}
\newtheorem {lemma}{Lemma}[theorem]
\newtheorem {corollary}{Corollary}
\newtheorem {conjecture}{Conjecture}
\newtheorem {proposition}{Proposition}
\theoremstyle{definition}
\newtheorem {definition}{Definition}
\newtheorem {remark}{Remark}
\newtheorem {example}{Example}
\let\@newpf\proof \let\proof\relax 
\newenvironment{proof}{\@newpf[\proofname]}{\qed\endtrivlist}
\def\hm#1{#1\nobreak\discretionary{}{\hbox{\m@th$#1$}}{}}
\begin{document}

\title{Compatible associative products and trees}
\author{Vladimir Dotsenko}
\address{%
Dublin Institute for Advanced Studies,\\
10 Burlington Road,\\
Dublin 4,\\
Ireland}
\email{vdots@maths.tcd.ie}
\thanks{The author was partially supported by CNRS--RFBR grant no.~07-01-92214, by the President of the Russian Federation grant no.~NSh-3472.2008.2 and by an IRCSET research fellowship.}

\begin{abstract}
We compute dimensions and characters of the components of the operad of two compatible associative products, and give an explicit combinatorial construction of the corresponding free algebras in terms of planar rooted trees. 
\end{abstract}
\maketitle

\section{Introduction}

\subsection{Description of results.}
An algebra with two compatible associative products is a vector space $V$ equipped with two binary operations such that each of them is an associative product, and these two products are compatible (i.e., any linear combination of these products is again an associative product). Such algebras were recently studied by Odesskii and Sokolov; in~\cite{OS}, they classified simple finite-dimensional algebras of this type. In this paper, we study another extreme case: free algebras of this type. Namely, we compute dimensions of graded components of this algebra, and also give an interpretation of operations in terms of combinatorics of planar rooted trees.  

Just as for an arbitrary algebraic structure, to get information about free algebras, one first computes the $S_n$-module structure (with respect to the action by permutations of the generators) on the ``multilinear part''  (i.e., the space of elements in which each of the generators occurs exactly once) of the free algebra with $n$ generators. Then this information is used in a rather straightforward way to compute the dimensions of all graded components. 

As our computation shows, the free $1$-generated algebra with two compatible products has Catalan numbers as dimensions of its graded components. We provide a materialisation of this formula, describing two compatible products on  planar rooted trees and proving that the algebra of planar rooted trees is a free algebra with two compatible products. (We actually give a more general construction which is valid for any number of generators.) We use this construction to obtain yet another proof of the results on the Grossman--Larson algebra of planar rooted trees.

\subsection{Machinery.}
To compute dimensions and characters for spaces of multilinear elements, we use Koszul duality for operads and the theory of Koszul operads developed by V.\,Ginzburg and M.\,Kapranov. It turns out that the Koszul dual to the operad of two compatible products is much simpler than the original operad. For any Koszul operad, information on the dimensions of its components can be used to obtain similar information for the dual operad. Namely, the following assertion is true.
\begin{proposition}[\cite{GK}]Let
$f_\calO(x):=\sum_{n=1}^\infty\frac{\dim\calO(n)}{n!}x^n$.
If $\calO$ is a Koszul operad, then
 $$
f_{\calO}(-f_{\calO^!}(-x))=x.
 $$
\end{proposition}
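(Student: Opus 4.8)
The plan is to read the identity off the Koszul complex of $\calO$ by an Euler--Poincar\'{e} argument. Recall that to the quadratic operad $\calO$, generated by a space $E$ of operations placed in homological degree $0$, one attaches its Koszul dual \emph{cooperad}, which I will denote $\calK$. The operads relevant to this paper are generated in arity $2$, which I assume throughout; then $\calK(n)$ is concentrated in homological degree $n-1$ (a binary tree with $n$ leaves has $n-1$ vertices, each carrying a degree-shifted generator). The Koszul dual \emph{operad} $\calO^!$ is obtained from $\calK$ by taking arity-wise linear duals and twisting by the sign representations, so that in particular $\dim\calO^!(n)=\dim\calK(n)$ for all $n$. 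The partial compositions of $\calO$ together with the infinitesimal decomposition of $\calK$ assemble into a twisting differential making the collection $\calO\circ\calK$ a chain complex, the \emph{Koszul complex}, and the defining property of a Koszul operad is that this complex is acyclic, i.e.\ quasi-isomorphic to the unit collection $I$ (with $I(1)=\k$ and $I(n)=0$ for $n\neq1$). I take this last fact as the input.

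Next I would set up the passage to generating functions. For a collection $\calC$ with $\calC(0)=0$ whose components are bounded complexes of finite-dimensional spaces, put $f^{\pm}_{\calC}(x):=\sum_{n\ge1}\tfrac{\chi(\calC(n))}{n!}x^{n}$, with $\chi$ the Euler characteristic; when $\calC(n)$ is concentrated in homological degree $d(n)$ this is $\sum_{n\ge1}(-1)^{d(n)}\tfrac{\dim\calC(n)}{n!}x^{n}$. Two observations are needed. First, $f^{\pm}_{\calC\circ\calD}=f^{\pm}_{\calC}\circ f^{\pm}_{\calD}$: this is the super-analogue of the exponential (plethystic) formula for the composition product, the signs causing no trouble because the homological degree of a decomposable element of $\calC\circ\calD$ is the sum of the degrees of its constituents. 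Second, $f^{\pm}_{\calO}=f_{\calO}$ and $f^{\pm}_{I}(x)=f_{I}(x)=x$ since both $\calO$ and $I$ sit in degree $0$, whereas for $\calK$, using $d(n)=n-1$, $\dim\calK(n)=\dim\calO^!(n)$ and $(-1)^{n-1}=-(-1)^{n}$,
$$
f^{\pm}_{\calK}(x)=\sum_{n\ge1}(-1)^{n-1}\frac{\dim\calO^!(n)}{n!}x^{n}=-f_{\calO^!}(-x).
$$
Then comes the Euler--Poincar\'{e} step: each $(\calO\circ\calK)(n)$ is a bounded complex of finite-dimensional spaces, so its Euler characteristic equals that of its homology, which by acyclicity is $\dim I(n)=\delta_{n,1}$; hence $f^{\pm}_{\calO\circ\calK}=f^{\pm}_{I}$. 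Combining everything,
$$
f_{\calO}\bigl(-f_{\calO^!}(-x)\bigr)=f^{\pm}_{\calO}\bigl(f^{\pm}_{\calK}(x)\bigr)=f^{\pm}_{\calO\circ\calK}(x)=f^{\pm}_{I}(x)=x,
$$
which is the claim.

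The one genuinely delicate point is the sign bookkeeping: one must verify, in whatever normalisation is used for $\calK$, that $\calK(n)$ truly lies in homological degree $n-1$, so that the sign attached to it is $(-1)^{n-1}$ --- which is exactly what matches, up to the overall minus, the $(-1)^{n}$ produced by substituting $x\mapsto-x$ into $f_{\calO^!}$. Everything else is routine: the compatibility of the composition product of collections with exponential generating functions is the elementary exponential formula, and the acyclicity of the Koszul complex is, for a Koszul operad, true by definition --- this is the Ginzburg--Kapranov ingredient being invoked. (For operads generated in arities other than $2$ the same argument applies after refining the homological grading to the weight grading, at the cost of an extra variable; only the binary case is needed here.)
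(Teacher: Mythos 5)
Your argument is correct: the Euler--Poincar\'e computation on the acyclic Koszul complex $\calO\circ\calK$, combined with the compatibility of exponential generating functions with the composition product and the observation that $\calK(n)$ sits in homological degree $n-1$ (whence $f^{\pm}_{\calK}(x)=-f_{\calO^!}(-x)$), is the classical Ginzburg--Kapranov proof of this identity. The paper itself does not reprove the proposition --- it cites \cite{GK} and then remarks that the equation is an immediate corollary of the finer functional equation $\varepsilon(F_{\calO})\circ\varepsilon(F_{\calO^!})=p_1$ in the ring $\Lambda_G$ of symmetric functions (quoted from \cite{DK}), recovered by the specialisation $p_1=x$, $p_2=p_3=\cdots=0$ of Remark~\ref{specialisations}. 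So the two routes differ in level of generality rather than in substance: your version works directly with dimensions and is more elementary and self-contained, while the paper's route records the full $S_n$- (and $SL_2$-) equivariant information, which is what is actually used in Section~\ref{chi-calc} to extract the Narayana refinement; your proof is precisely the shadow of that character identity under the specialisation killing $p_2,p_3,\ldots$. Your parenthetical caveats are also well placed: the freeness of the $S_m$-action on surjections is what makes the signed composition formula $f^{\pm}_{\calC\circ\calD}=f^{\pm}_{\calC}\circ f^{\pm}_{\calD}$ unproblematic, and the restriction to binary generators is harmless here since $\As^2$ is binary quadratic.
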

A similar functional equation holds for the generating functions of characters of representations of the symmetric groups in the components of an operad. 

\begin{example}
For the associative operad $\As$, we have $\dim\As(n)=n!$, and so $f_{\As}(x)=\frac{x}{1-x}$. This operad is Koszul and self-dual, which agrees with the functional equation:
 $$
\frac{\frac{x}{1+x}}{1-\frac{x}{1+x}}=x. 
 $$
\end{example}

Koszulness of the operad of two compatible products was proved by Stroh\-mayer~\cite{Stro}, whose result is crucial for this paper. In our study of free algebras over $\As^2$, we also use an elegant result of Chapoton which can be used in many cases to prove that some class of algebras consists of free algebras.

\subsection{Outline of the paper.}
Throughout the paper, we assume that the reader is familiar with the main notions of operad theory. Still we briefly remind the reader of some of them when they appear in the text.

In Section~\ref{Koszul}, we recall some standard definitions of operad theory, define the operad of two compatible brackets, and list necessary facts about Koszul duality for operads.
In Section~\ref{chi-calc}, we compute the generating functions for the characters of our operads using functional equations on these generating functions, and use them to identify the corresponding representations.
In Section~\ref{monomial}, we construct a monomial basis in the multilinear part of the free algebra with two compatible products. 
In Section~\ref{freealg}, we prove that free algebras with two compatible products are free as associative algebras.
In Section~\ref{comb-oper}, we relate compatible associative products to combinatorics of trees. It turns out that the linear span of planar rooted trees has two compatible associative products, and that the resulting algebra is a free algebra with two compatible products. We also give another proof of the result of Grossman and Larson \cite{GL} on the algebra of planar rooted trees.
We conclude with some remarks and conjectural generalisations for other compatible structures.

All vector spaces and algebras throughout this paper are defined over an arbitrary field of zero characteristic.

\subsection{Acknowledgements.}
This paper was inspired by a table-talk at the Workshop on Algebraic Structures in Geometry and Physics (Leicester, July 2008). The author is grateful to Alexander Odesskii for his questions on the operad of compatible associative products, and to Andrey Lazarev for an opportunity to attend the workshop. He also wishes to thank Alexander Frolkin who read the preliminary version of this article and corrected some misprints and flaws in typesetting and language. Special thanks are to Jean-Louis Loday for useful comments.

\section{Operads: summary}\label{Koszul}

\subsection{$\mathbb{S}$-modules and operads.}
An $\mathbb{S}$-module is a collection $\{\calV(n), n\ge1\}$ of vector spaces, where each $\calV(k)$ is an $S_k$-module. Morphisms, direct sums, tensor products and duals of such objects are defined in the most straightforward way. The category of $\mathbb{S}$-modules is denoted by~$\mathbb{S}\mathrm{-mod}$.

The module $\Det$, where $\Det(n)$ is the sign representation of~$S_n$, is an important example of an $\mathbb{S}$-module. We need the following version of the dual module: $\calV^\vee=\calV^*\otimes\Det$; this is the ordinary dual twisted by the sign representation.
In some cases, we consider differential graded  $\mathbb{S}$-modules; all preceding constructions are defined for them in a similar way. The graded analogue of $\Det$ is denoted by $\calE$; the space $\calE(n)_{1-n}$ is one-dimensional and is the sign representation of the symmetric group, while all other spaces $\calE(n)_k$ are zero.

Each $\mathbb{S}$-module $\calV$ gives rise to a functor from the category $\calF\!in$ of finite sets (with bijections as morphisms) to the category of vector spaces. Namely, for a set $I$ of cardinality $n$ let
 $$
\calV(I)=\k\Hom_{\calF\!in}([n],I)\otimes_{\k S_n}\calV(n).
 $$
(Here $[n]$ stands for the ``standard'' set $\{1,2,\ldots,n\}$.)

For $\mathbb{S}$-modules $\calV$ and $\calW$, define the composition $\calV\circ\calW$ as
 $$
(\calV\circ\calW)(n)=\bigoplus\limits_{m=1}^n\calV(m)\otimes_{\k S_m}
\left(\bigoplus\limits_{f\colon [n]\twoheadrightarrow [m]}\bigotimes\limits_{l=1}^m\calW(f^{-1}(l))\right),
 $$
where the sum is taken over all surjections $f$. This operation equips the category of $\mathbb{S}$-modules with a structure of a monoidal category. An operad is a monoid in this category. See~\cite{MSS} for a more detailed definition. To simplify the definitions, we consider in this paper only operads~$\calO$ with $\calO(1)=\k$.

Let $V$ be a vector space. By definition, the operad $\End_V$ of linear mappings is the collection $\{\End_V(n)=\Hom(V^{\otimes n},V), n\ge1\}$ of all multilinear mappings of $V$ into itself with the obvious composition maps.

Using the operad of linear mappings, we can define an algebra over an operad $\calO$;
a structure of such an algebra on a vector space is a morphism of the operad $\calO$ into the corresponding operad of linear mappings. Thus an algebra over an operad $\calO$ is a vector space $W$ together with a collection $\calO(n)\otimes_{\k S_n} W^{\otimes n}\to W$ of mappings with obvious compatibility conditions. The free algebra generated by a vector space $X$ over an operad $\calO$ is
 $$\calO(X):=\calO\circ X=\bigoplus_{k=1}^\infty\calO(n)\otimes_{\k S_n}X^{\otimes n}.$$

\subsection{Operads defined by generators and relations.}
The free operad $\calF_\calG$ generated by an $\mathbb{S}$-module $\calG$ (with $\calG(1)=0$) is defined as follows. A basis in this operad consists of some species of trees. These trees have a distinguished root (of degree one). A tree belonging to $\calF_\calG(n)$ has exactly $n$ leaves, its internal vertices (neither leaves nor the root) labelled by basis elements of $\calG$, any vertex with $k$ siblings being labelled by an element of $\calG(k)$.
The unique tree whose set of internal vertices is empty generates the one-dimensional space  $\calF_\calG(1)$. The composition of a tree $t$ with $l$ leaves and trees $t_1,\ldots,t_l$ glues the roots of $t_1,\ldots,t_l$ to the respective leaves of $t$. (In every case, two edges glued together become one edge, and the common vertex becomes an interior point of this edge.)

Free operads are used to define operads by generators and relations. Let $\calG$ be an $\mathbb{S}$-module, and let $\calR$ be an $\mathbb{S}$-submodule in $\calF_\calG$.
An (operadic) ideal generated by $\calR$ in $\calF_\calG$ is the linear span of all trees such that at least one internal vertex is labelled by an element of~$\calR$.
An operad with generators $\calG$ and relations~$\calR$ is the quotient of the free operad~$\calF_\calG$ modulo this ideal.

\begin{definition}
The associative operad $\As$ is generated by one binary operation $\star\colon a,b\mapsto a\star b$. The relations in this operad are equivalent to the associativity condition for every algebra over this operad:
 $$
(a\star b)\star c=a\star (b\star c).
 $$

The operad of two compatible associative products $\As^{2}$ is generated by two binary operations (products) $\star_1$ and $\star_2$. The relations in this operad are equivalent to the following identities in each algebra over this operad: the associativity conditions
\begin{gather*}
(a\star_1 b)\star_1 c=a\star_1 (b\star_1 c),\\
(a\star_2 b)\star_2 c=a\star_2 (b\star_2 c)
\end{gather*}
for products and the four-term relation
\begin{equation}\label{compat}
(a\star_1 b)\star_2 c+(a\star_2 b)\star_1 c=a\star_1 (b\star_2 c)+a\star_2 (b\star_1 c)
\end{equation}
between the products.
\end{definition}

\subsection{Koszul duality for operads.}

Let an operad $\calO$ be defined by a set of binary operations $\calB$ with 
quadratic relations $\calR$ (that is, relations involving ternary operations obtained by compositions from the given binary operations). In this case, $\calO$ is said to be quadratic. For quadratic operads, there is an analogue of Koszul duality for quadratic algebras. To a quadratic operad $\calO$, this duality assigns the operad $\calO^!$ with generators $\calB^\vee$ and with the annihilator of $\calR$ under the natural pairing as the space of relations. Just as in the case of quadratic algebras, $(\calO^!)^!\simeq\calO$. 

\begin{example}[\cite{GK}]
The operad $\As$ is self-dual: $\As^!\simeq\As$.
\end{example}

The cobar complex $\mathbf{C}(\calO)$ of an operad $\calO$ is the free operad with generators $\{\calO^*(n), n\ge2\}$ equipped with a differential $d$ with $d^2=0$ (see \cite{MSS} for details). Once again we use twisting by the sign, now to get another version of the cobar complex, $\mathbf{D}(\calO)=\mathbf{C}(\calO)\otimes\calE$.
The zeroth cohomology of~$\mathbf{D}(\calO)$ is isomorphic to the operad~$\calO^!$.

\begin{definition}
An operad $\calO$ is said to be Koszul if $H^i(\mathbf{D}(\calO))=0$ for~$i\ne0$.
\end{definition}

\begin{proposition}[\cite{Stro}]
The operad $\As^{2}$ is Koszul.
\end{proposition}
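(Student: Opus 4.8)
The plan is to exhibit the operad $\As^2$ as arising from a distributive law between two copies of $\As$ (one for each of the products $\star_1$, $\star_2$), and to verify that this distributive law satisfies the coherence condition that guarantees Koszulness. The starting observation is that $\As^2$ is generated by $\calG=\As(2)\oplus\As(2)$ (two copies of the $S_2$-module underlying the binary generator of $\As$), and that the space of relations $\calR$ in $\calF_\calG(3)$ splits into three parts: the associativity relations for $\star_1$ alone, the associativity relations for $\star_2$ alone, and the ``mixed'' relations. The mixed relations — these are the relation~\eqref{compat} together with its $S_3$-orbit, namely all compositions that mix one $\star_1$ and one $\star_2$ — should be read as a rewriting rule that re-expresses a right-combed mixed tree in terms of left-combed mixed trees, i.e. as the graph of a map $\lambda\colon\As\circ\As\to\As\circ\As$ in the appropriate truncated sense. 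First I would make this precise: set up the two quadratic operads $\calA=\As$ (with product $\star_1$) and $\calB=\As$ (with product $\star_2$), and define a rewriting/distributive law $\lambda$ on the level of the $\mathbb{S}$-modules $\calA\circ\calB$, checking that the relations of $\As^2$ are exactly those generated by the relations of $\calA$, the relations of $\calB$, and the graph of $\lambda$.

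Next I would invoke the general machinery (due to Markl, and refined by Dokas, Vallette, and others; in the form convenient here it is essentially the operadic analogue of the PBW-type criterion) which says: if $\calA$ and $\calB$ are Koszul operads and $\lambda\colon\calB\circ\calA\to\calA\circ\calB$ is a distributive law — meaning the induced map makes $\calA\circ\calB$ into an operad, equivalently the ``diamond'' coherence pentagon for $\lambda$ holds on the weight-$3$ component — then the operad $\calA\vee_\lambda\calB$ obtained by this construction is Koszul, with underlying $\mathbb{S}$-module $\calA\circ\calB$. Since $\As$ is Koszul (stated in the excerpt), the content to verify is (a) that $\lambda$ as defined really is a distributive law, and (b) that the operad it produces is isomorphic to $\As^2$. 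For (b), one compares Hilbert series or simply checks that $\calA\circ\calB$ with the $\lambda$-twisted composition satisfies the defining relations of $\As^2$ and is generated in the right way; the surjection $\As^2\twoheadrightarrow\calA\vee_\lambda\calB$ is automatic from the relations, and injectivity follows because a spanning set of size $\dim(\calA\circ\calB)(n)$ is produced by the rewriting and the target has exactly that dimension.

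The main obstacle is step (a): verifying that $\lambda$ is a genuine distributive law. Concretely, this is a finite but non-trivial check on the weight-$3$ (arity-$4$) component $\calF_\calG(4)$: one must show that every mixed monomial tree — a composite of one $\star_2$ above/below combinations of $\star_1$'s, or vice versa — can be rewritten, using~\eqref{compat} and the two associativities, into a unique normal form (say, all-left-combed, with $\star_1$'s below $\star_2$'s in a fixed order), and that the two different ways of beginning the reduction of an overlapping ambiguity (the two critical pairs coming from a depth-$3$ tree with one $\star_1$ and one $\star_2$, where the rewriting can be applied at the top or the bottom) converge to the same result. This is precisely the confluence of the critical pairs, and it is the heart of the proof. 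I would organize this as a Gröbner-basis / rewriting-system computation in $\calF_\calG$: fix an admissible ordering on tree monomials, orient associativity and~\eqref{compat} as rewriting rules, and check that all the (finitely many) critical pairs of mixed type resolve — the pure-$\star_1$ and pure-$\star_2$ critical pairs resolve by Koszulness of $\As$, and only the genuinely mixed overlaps require the four-term relation~\eqref{compat}, where a short computation using associativity on both sides of~\eqref{compat} closes the diamond. Once confluence is established, Koszulness of $\As^2$ follows from the distributive-law theorem, completing the proof.
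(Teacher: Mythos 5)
There is a genuine gap, and it is fatal to the strategy rather than a detail to be checked: $\As^2$ does \emph{not} arise from a distributive law between two copies of $\As$. A distributive law $\lambda\colon\calB\circ\calA\to\calA\circ\calB$ forces the resulting operad to have underlying $\mathbb{S}$-module $\calA\circ\calB$, and for $\calA=\calB=\As$ this has exponential generating function $f_{\As}\circ f_{\As}(x)=\frac{x}{1-2x}$, i.e.\ $\dim(\As\circ\As)(n)=2^{n-1}n!$. But $\dim\As^2(n)=c_n\,n!$, and already for $n=3$ one has $30\ne 24$. The source of the discrepancy is visible in arity $3$: the mixed part of the relation space of $\As^2$ is only the $6$-dimensional $S_3$-orbit of the single four-term relation~\eqref{compat}, whereas the graph of a map between the two $12$-dimensional halves of the mixed part of the free operad would have to be $12$-dimensional. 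Moreover, \eqref{compat} cannot even be oriented in the way you describe: each side of it contains one monomial with $\star_1$ at the root and one with $\star_2$ at the root, so it is not of the form ``($\star_2$ above $\star_1$) equals a combination of ($\star_1$ above $\star_2$)''. Consequently the confluence check you defer to step (a) cannot succeed with $\calA\circ\calB$-shaped normal forms: any consistent rewriting produces the strictly larger, Catalan-indexed family of normal forms (this is exactly the basis $\mathfrak{B}(A)$ of Section~\ref{monomial}, where the first factor of a $\star_2$-product may itself be a nontrivial $\star_1$-product), and your ``injectivity by dimension count'' step would compare against the wrong dimension.

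For the record, the paper does not prove this proposition at all; it is quoted from Strohmayer~\cite{Stro}, whose argument proceeds by entirely different means (analysis of the Koszul dual and posets of weighted partitions), precisely because the naive distributive-law mechanism is unavailable here. A rewriting/Gr\"obner-basis proof of Koszulness of $\As^2$ is not out of the question in principle, but it would have to be set up with the normal forms of Section~\ref{monomial} (count $c_n\,n!$) as its target, not with $\As\circ\As$, and the confluence of the mixed critical pairs would then be the substantive computation to carry out rather than a formality.
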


\subsection{Generating functions and characters.}
As we mentioned above, to each operad (and more generally, to each $\mathbb{S}$-module) $\calO$ one can assign the formal power series (the exponential generating function of the dimensions) 
 $$
f_\calO(x)=\sum_{n=1}^\infty\frac{\dim\calO(n)}{n!}x^n,
 $$
and if $\calO$ is a Koszul operad, then
 $$
f_{\calO}(-f_{\calO^!}(-x))=x.
 $$
This functional equation is an immediate corollary of a functional equation relating more general generating functions that will be defined now.

The character of a representation $M$ of the symmetric group $S_n$ can be identified \cite{Mac}
with a symmetric polynomial $F_M(x_1,x_2,\ldots)$ of degree~$n$ in infinitely many variables.
To each $\mathbb{S}$-module $\calV$ we assign the element 
 $$
F_\calV(x_1,\ldots,x_k,\ldots)=\sum_{n\ge1}F_{\calV(n)}(x_1,\ldots,x_k,\ldots)
 $$ 
of the algebra $\Lambda$ of symmetric functions. This algebra is the completion of the algebra of symmetric polynomials in infinitely many variables with respect to the valuation defined by the degree of a polynomial. It is isomorphic to the algebra of formal power series in Newton power sums $p_1,\ldots,p_n,\ldots$. The series $F_\calV$ is a generating series for the characters of symmetric groups. Namely, by multiplying the coefficient of $p_1^{n_1}\ldots p_k^{n_k}$ by
$1^{n_1}n_1!\ldots k^{n_k}n_k!$, we obtain the value of the character of $\calV(n)$ on a permutation whose decomposition into disjoint cycles contains $n_1$ cycles of length~$1$, \ldots, $n_k$ cycles of length~$k$.
This definition can be generalised to the case of differential graded modules; for such a module $\calV=\bigoplus_i\calV_i$ we set $F_\calV=\sum_i(-1)^iF_{\calV_i}$ (the Euler characteristic of $\calV$).

If $\calV$ is equipped with an action of a group $G$ commuting with the action of the symmetric groups, then for each $n$ the group $S_n\times G$ acts on the space $\calV(n)$. In this case, to $\calV$ we assign an element of the algebra $\Lambda_{G}$ of symmetric functions over the character ring of $G$ (or, in other words, a character of $G$ ranging over symmetric functions). We denote this element by $F_\calV(x_1,\ldots,x_n,\ldots;g)$, where $g\in G$. 

\begin{remark}\label{specialisations}
Further in this text we use the following properties of specialisations of our generating functions:
\begin{enumerate}
 \item $f_\calV(x)=F_\calV(p_1,\ldots)|_{p_1=x,\,p_2=p_3=\ldots=0}$;
 \item $F_\calV(p_1,\ldots)=F_\calV(p_1,\ldots;g)|_{g=e}$, where $e$ is the identity element of the group $G$;
 \item for any finite-dimensional vector space $V$ (considered as an $\mathbb{S}$-module concentrated in degree~$1$),
 $$
f_{\calV(V)}(x)=F_\calV(p_1,\ldots)|_{p_1=x\dim V,\,p_2=x^2\dim V,\,p_3=x^3\dim V,\ldots},
 $$
where $\calV(V)=\calV\circ V$, and $f_{\calV(V)}(x)$ is the generating function for dimensions of the (graded) vector space $\calV(V)$.
\end{enumerate}
\end{remark}

\subsection{Functional equation for characters.}
\begin{definition}
Fix $H(x_1,x_2,\ldots;g)\in\Lambda_G$. The plethysm corresponding to $H$ (the plethystic substitution of~$H$) is the algebra homomorphism $F\mapsto F\circ H$ of $\Lambda_G$ into itself that is linear over the character ring of $G$ and is defined on symmetric functions by
$p_n\circ H=H(x_1^n,x_2^n,\ldots; g^n)$.
\end{definition}

In particular,
 $$
p_n\circ (H(p_1,p_2,\ldots,p_k,\ldots;g))=H(p_n,p_{2n},\ldots,p_{kn},\ldots;g^n).
 $$

Let $\varepsilon$ be the involution of $\Lambda_G$ linear over the character ring of $G$ and taking $p_n$ to $-p_n$. 

\begin{theorem}[\cite{DK}]
Suppose that the operad $\calO$ is Koszul. Then the following equation holds in $\Lambda_G$: 
 $$
\varepsilon(F_{\calO})\circ\varepsilon(F_{\calO^!})=p_1.
 $$
\end{theorem}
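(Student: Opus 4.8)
The plan is to apply the Euler-characteristic series $\calV\mapsto F_\calV$ to the acyclicity of the cobar complex of a Koszul operad, read a functional equation off the free-operad structure of that complex, and clear the signs with the involution $\varepsilon$. Three ingredients are needed. First, operadic composition becomes plethysm: for $\mathbb{S}$-modules $\calV,\calW$ one has $F_{\calV\circ\calW}=F_\calV\circ F_\calW$, with the right-hand $\circ$ the plethystic substitution defined above; this is the symmetric-function form of the exponential formula \cite{Mac}, and it extends to differential graded $\mathbb{S}$-modules with $F$ read as the Euler characteristic. (Over our ground field all components are finite-dimensional and the characters of $S_n$ are rational, so $F_{\calV^*}=F_\calV$; dualising generators is harmless.) Second, the free operad on an $\mathbb{S}$-module $\calM$ with $\calM(1)=0$ satisfies $\calF_\calM\cong\calI\oplus(\calM\circ\calF_\calM)$, where $\calI$ is the unit $\mathbb{S}$-module; since $F_\calI=p_1$ and plethysm is a ring homomorphism in its first argument, this gives $F_{\calF_\calM}=p_1+F_\calM\circ F_{\calF_\calM}$. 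Third, for series without constant term one has $\varepsilon(F\circ H)=F\circ\varepsilon(H)$, since $\varepsilon(p_n\circ H)=p_n\circ\varepsilon(H)$ and $\varepsilon$ is a ring homomorphism.

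For the argument itself, note that $\mathbf{D}(\calO)$ is a free operad on the $\mathbb{S}$-module of its generators, namely a (de)suspended, $\calE$-twisted copy of $\{\calO^*(n)\mid n\ge2\}$; write $G$ for the Euler-characteristic series of this generator module. The differential leaves Euler characteristics unchanged, so $Z:=F_{\mathbf{D}(\calO)}$ satisfies $Z=p_1+G\circ Z$. The crucial bookkeeping step is that, on Euler characteristics, the suspension-and-sign-twist built into $\mathbf{D}$ realises exactly the involution $\varepsilon$: using $F_{\calO^*}=F_\calO$ one computes $G=\varepsilon(F_\calO)+p_1$, the extra $p_1$ accounting for the arity-$1$ part $\calI$ of $\calO$, which is absent among the generators. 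Substituting, and using $p_1\circ Z=Z$ with additivity of plethysm in the first slot, the equation $Z=p_1+(\varepsilon(F_\calO)+p_1)\circ Z=p_1+Z+\varepsilon(F_\calO)\circ Z$ collapses to
 $$
\varepsilon(F_\calO)\circ Z=-p_1.
 $$
Koszulness enters here: $H^i(\mathbf{D}(\calO))=0$ for $i\ne0$ and $H^0(\mathbf{D}(\calO))\cong\calO^!$, so, the Euler characteristic being a quasi-isomorphism invariant, $Z=F_{\mathbf{D}(\calO)}=F_{\calO^!}$. Hence $\varepsilon(F_\calO)\circ F_{\calO^!}=-p_1$, and applying $\varepsilon$ together with $\varepsilon(F\circ H)=F\circ\varepsilon(H)$ gives $\varepsilon(F_\calO)\circ\varepsilon(F_{\calO^!})=\varepsilon(-p_1)=p_1$, which is the assertion.

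The main obstacle is the bookkeeping identity $G=\varepsilon(F_\calO)+p_1$: one must check that replacing an arity-$n$ component by its linear dual, placing it in the homological degree dictated by the cobar construction, and twisting by $\calE(n)$ (the sign representation in degree $1-n$) together have, after forming $\sum_i(-1)^iF_{(\cdot)_i}$, precisely the effect of $p_n\mapsto-p_n$ on the arity-$n$ part of $F_\calO$; the remaining steps are formal. As a consistency check, the specialisation $p_1=x$, $p_k=0$ for $k\ge2$ turns $\varepsilon$ into $x\mapsto-x$ and recovers the dimension identity $f_\calO(-f_{\calO^!}(-x))=x$ of \cite{GK} stated above; the full equation may also be verified by hand for $\calO=\As$ (self-dual) and for $\calO=\Com$, $\calO^!=\Lie$.
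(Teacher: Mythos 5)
The paper offers no proof of this statement: it is quoted as a known result and attributed to \cite{DK}, so there is nothing internal to compare against. Judged on its own, your derivation is correct and is essentially the standard one (the argument of \cite{GK} as refined in \cite{DK}): apply the Euler-characteristic series to the cobar complex, use $F_{\calV\circ\calW}=F_\calV\circ F_\calW$ and the recursion $\calF_\calM\cong\calI\oplus(\calM\circ\calF_\calM)$ to get $Z=p_1+G\circ Z$, identify $G=\varepsilon(F_\calO)+p_1$, and use Koszulness to replace $Z$ by $F_{\calO^!}$. The algebraic manipulations ($p_1\circ Z=Z$, additivity of plethysm in the first slot, $\varepsilon(F\circ H)=F\circ\varepsilon(H)$, $\varepsilon(-p_1)=p_1$) all check out, and you correctly isolate the one genuinely delicate point, namely that dualising, shifting to cobar degree $2-n$ in arity $n$, and twisting by $\calE(n)$ combine on Euler characteristics to the sign $(-1)^n$ together with tensoring by the sign representation, i.e.\ to $\varepsilon$ on the arity-$n$ part. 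Two small remarks. First, since the theorem is asserted in $\Lambda_G$, the parenthetical ``$F_{\calV^*}=F_\calV$'' needs a word about the $G$-direction as well: the $S_n$-character is rational so dualising is invisible there, but the $G$-character of a dual is $g\mapsto\chi(g^{-1})$, so one should either note that this is absorbed into the definition of $F_{\calO^!}$ via the generators $\calB^\vee$ or restrict to groups (such as $SL_2$, the only case used in this paper) whose relevant representations are self-dual. Second, the replacement of $Z$ by the Euler characteristic of the cohomology tacitly uses that each arity-and-degree component is finite dimensional, which holds here but is worth saying. Neither point is a gap in substance.
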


\section{Calculation of dimensions and characters}\label{chi-calc}

Note that the components of $\As^2$ and $(\As^2)^!$ are equipped with an action of $SL_2$ (arising from the action on the space of generators of the operad~$\As^2$), which commutes with the action of the symmetric groups. All information about these operads will follow from the functional equation on the characters and the explicit description of the representation $(\As^2)^!(n)$ of the group $SL_2\times S_n$.

The character ring of $SL_2$ is isomorphic to the ring of Laurent polynomials in one variable~$q$ (for example, the character of the $n$-dimensional irreducible representation $L(n-1)$ is equal to $\frac{q^n-q^{-n}}{q-q^{-1}}$); the element of $\Lambda_{SL_2}$ corresponding to an $\mathbb{S}$-module $\calV$ is denoted by $F_\calV(p_1,\ldots,p_n,\ldots;q)$. This notation differs a little from the one introduced above, but we hope that it will not lead to a confusion. In this case, the plethysm is defined as follows: $p_n\circ q=q^n$.

\begin{theorem}
For the operad of two compatible associative products, the following holds:
\begin{gather*}
f_{\As^2}(x)=\sum_{n\ge1}c_nx^n,\\ 
F_{\As^2}(p_1,\ldots)=\sum_{n\ge1}c_np_1^n,\\
F_{\As^2}(p_1,\ldots;q)=\sum_{n\ge1}p_1^n(q^{n-1}+N_{n,1}q^{n-3}+\ldots+N_{n,k}q^{n-1-2k}+\ldots),
\end{gather*}
where $$c_n=\frac{1}{n+1}\binom{2n}{n}$$ are Catalan numbers, and $$N_{n,k}=\frac{1}{n}\binom{n}{k}\binom{n}{k+1}$$ are Narayana numbers.
\end{theorem}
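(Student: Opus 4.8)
The plan is to work with the Koszul dual operad $(\As^2)^!$, compute its character generating function $F_{(\As^2)^!}$ explicitly, and then invert the functional equation of the theorem of \cite{DK} to recover $F_{\As^2}$. The first step is to identify the quadratic dual. Since $\As^2$ is generated by two binary operations $\star_1,\star_2$, which together span a $2$-dimensional (hence $SL_2$-standard) representation tensored with the regular representation of $S_2$, the space of generators $\calB$ satisfies $\calB^\vee\cong\calB$ as an $SL_2$-module; so $(\As^2)^!$ is again generated by two binary operations. The relations of $(\As^2)^!$ are the annihilator of the relations of $\As^2$. The relation space of $\As^2$ inside $\calF_\calB(3)$ consists of the two associativities $(a\star_i b)\star_i c-a\star_i(b\star_i c)$ together with the four-term relation~\eqref{compat}; taking the orthogonal complement under the natural pairing on ternary trees, one finds (this is a short linear-algebra computation that I expect to be routine, cf.\ the analogous computation for compatible Lie brackets) that $(\As^2)^!$ is the operad in which \emph{all} quadratic monomials $x\star_i(y\star_j z)$ and $(x\star_i y)\star_j z$ are expressible in a single normal form, forcing the components $(\As^2)^!(n)$ to be small: concretely I expect $\dim(\As^2)^!(n)=n!\cdot n$ or a similarly elementary formula, with $(\As^2)^!(n)$ as an $S_n$-module being a sum of copies of the regular representation, and as an $SL_2$-module being $L(0)^{\oplus a}\oplus L(2)^{\oplus b}$ with $a,b$ small. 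The key is that $(\As^2)^!$ is ``almost'' $\As$ with an extra $SL_2$-grading by how many $\star_2$'s are used.

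Granting the explicit description of $(\As^2)^!(n)$ as an $SL_2\times S_n$-module, the second step is to write down $F_{(\As^2)^!}(p_1,\ldots;q)$. Because each component is a multiple of the regular representation of $S_n$, its symmetric-function character is a multiple of $p_1^n$, so $F_{(\As^2)^!}=\sum_{n\ge1} g_n(q)\,p_1^n$ for explicit Laurent polynomials $g_n(q)$ recording the $SL_2$-content; I expect $g_n(q)=q^{1-n}+q^{3-n}+\cdots+q^{n-1}=\frac{q^n-q^{-n}}{q-q^{-1}}$, i.e.\ $(\As^2)^!(n)\cong L(n-1)\otimes(\text{regular rep})$, which is the natural guess matching the $q^{n-1}$ leading term in the claimed answer for $\As^2$. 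The third step is to apply the functional equation $\varepsilon(F_{\As^2})\circ\varepsilon(F_{(\As^2)^!})=p_1$. Since all the characters in sight are supported on powers of $p_1$, the plethystic substitution collapses: $p_n\circ(\text{power series in }p_1)$ only ever feeds $p_1\mapsto p_n$, $q\mapsto q^n$, so the equation becomes an \emph{ordinary} functional equation for the one-variable series $\Phi(t;q):=\sum_n g_n(q)(-1)^{n-1}(-t)^n$ versus $\Psi(t;q):=\sum_n c_{n}(q)(-1)^{n-1}(-t)^n$ where $c_n(q)$ are the sought coefficients $q^{n-1}+N_{n,1}q^{n-3}+\cdots$. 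Solving $\Psi(\Phi(t;q);q)=t$ — or rather its $\varepsilon$-twisted form — is then a concrete power-series inversion.

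The fourth step is the actual inversion and identification of the answer. With $g_n(q)=\frac{q^n-q^{-n}}{q-q^{-1}}$, the series $\sum g_n(q)y^n$ sums to a rational function of $y$ and $q$, namely $\frac{y}{(1-qy)(1-q^{-1}y)}$ (geometric series in $qy$ and $q^{-1}y$). Inverting $y\mapsto \frac{y}{(1-qy)(1-q^{-1}y)}$ (after the sign twists) amounts to solving a quadratic in the inverse variable, which produces a square-root generating function; expanding it, the coefficient of $p_1^n$ should come out as $\sum_k N_{n,k}q^{n-1-2k}$ by the classical fact that $\sum_k N_{n,k}t^k$ is the Narayana/Catalan generating polynomial, with $\sum_n\big(\sum_k N_{n,k}t^k\big)y^n$ satisfying exactly the quadratic $C=1+yC+yt\,C\cdot(C-1)$ or its standard variant. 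Specialising $q=1$ collapses $g_n(1)=n$ and $\sum_k N_{n,k}=c_n$, recovering $f_{\As^2}(x)=\sum c_n x^n$ and $F_{\As^2}(p_1,\ldots)=\sum c_n p_1^n$; this is Remark~\ref{specialisations}(1),(2) applied to the $q$-answer, so the first two displayed formulas follow for free once the third is established.

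The main obstacle I anticipate is the first step: pinning down $(\As^2)^!$ precisely — both its dimension and its $SL_2\times S_n$-module structure — from the orthogonal complement of the relations. The four-term relation~\eqref{compat} is not homogeneous in the number of $\star_2$'s in the naive way, so one must be careful about how the $SL_2$-action interacts with the pairing (the pairing is $SL_2$-equivariant only after the correct identification $\calB^\vee\cong\calB$, which involves the invariant symplectic form on the $2$-dimensional space and hence a sign). Once the dual operad is correctly identified as $L(n-1)$ tensored with the regular representation (or whatever the correct small answer is), the rest is the essentially mechanical chain: sum a geometric series, invert via a quadratic, match to Narayana numbers, and specialise $q=1$. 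An alternative route that sidesteps computing $(\As^2)^!$ directly would be to guess $F_{(\As^2)^!}$ from the claimed $F_{\As^2}$ via the functional equation and then only \emph{verify} Koszulness-compatibility, but since Koszulness is already given by \cite{Stro}, the honest computation of the quadratic dual is cleaner and is the step I would present in full detail.
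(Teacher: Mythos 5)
Your proposal follows essentially the same route as the paper: identify $(\As^2)^!(n)\cong \mathbb{Q}S_n\otimes L(n-1)$, sum the resulting geometric series to $\frac{p_1}{(1-qp_1)(1-q^{-1}p_1)}$, invert via the Koszul functional equation (noting everything is supported on powers of $p_1$), and match the resulting quadratic to the Narayana generating function, with $q=1$ and $p_1=x$ specialisations giving the first two formulas. The one step you leave as an ``expected'' guess --- the $SL_2\times S_n$-module structure of the dual, equivalently $\dim(\As^2)^!(n)=n\cdot n!$ --- is exactly right, and the paper does not carry out that linear algebra by hand either: it quotes Strohmayer's description of $(\As^2)^!$ together with the $SL_2$-analysis from \cite{DK}, which is the cleanest way to discharge what you call your main obstacle.
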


\begin{proof}
Note that the substitution $p_1=t$ transforms the second formula into the first one, and the substitution $q=1$ transforms the third equation into the second one (Narayana numbers refine Catalan numbers \cite{Stan2}). Thus, the third statement implies the other two, so we shall restrict ourselves to proving only the former. From the results of Strohmayer \cite{Stro} (combined with results on $SL_2$-modules from \cite{DK}), it follows that as an $S_n\times SL_2$-module,  
 $$
(\As^2)^!(n)=\mathbb{Q}S_n\otimes L(n-1).
 $$ 
Thus, the $S_n\times SL_2$-character of the Koszul dual operad is given by the formula
 $$
\sum_{n\ge1}\frac{q^n-q^{-n}}{q-q^{-1}}p_1^n=\frac{p_1}{(1-qp_1)(1-q^{-1}p_1)}. 
 $$ 
The functional equation for characters implies that the character $$F_{\As^2}(p_1,\ldots,p_n,\ldots;q)$$ satisfies 
\begin{equation}\label{FuncAs}
\frac{F_{\As^2}}{(1+qF_{\As^2})(1+q^{-1}F_{\As^2})}=p_1.
\end{equation}
From this equation it is clear that $F_{\As^2}(p_1,\ldots;q)$ depends only on $p_1$ (and $q$).

On the other hand, it is well known (see, for example, \cite{Stan2}, which should be adjusted to our parametrisation of Narayana numbers) that the generating function
 $$
N(t,x)=1+\sum_{n=1}^{\infty}\sum_{k=0}^{n-1}N_{n,k}t^nx^k
 $$
of Narayana numbers satisfies the equation
\begin{equation}\label{Nara}
txN^2(t,x)-txN(t,x)+tN(t,x)-N(t,x)+1=0.
\end{equation}
It is easy to see that the third statement of the theorem is equivalent to the following equation for generating functions:
 $$
N(p_1q^{-1},q^2)=1+q^{-1}F_{\As^2}(p_1,q).
 $$
Let
 $$
N(p_1q^{-1},q^2)=1+q^{-1}G(p_1,q).
 $$
From the functional equation \eqref{Nara} we deduce that $G$ satisfies
 $$
p_1q(1+q^{-1}G)^2-p_1q(1+q^{-1}G)+\frac{p_1}{q}(1+q^{-1}G)-(1+q^{-1}G)+1=0,
 $$
which can be rewritten as
 $$
p_1(1+(q+q^{-1})G+G^2)=G.
 $$
The latter equation coincides with equation \eqref{FuncAs}, and determines $G$ uniquely, thus $G=F_{\As^2}$
\end{proof}

\begin{corollary}
\begin{enumerate}
 \item As $S_n$-module, $\As^2(n)$ is free of rank $c_n$.
 \item As $S_n\times SL_2$-module, 
 $$
\As^2(n)\simeq\mathbb{Q}S_n\otimes(L(n-1)+L(n-3)^{N_{n,1}-1}+L(n-5)^{N(n,2)-N(n,1)}+\ldots).
 $$
\end{enumerate}
\end{corollary}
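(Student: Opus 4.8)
The plan is to read off both statements directly from the three character formulas established in the theorem, using the standard fact that, over a field of characteristic zero, a finite-dimensional representation of $S_n$ (respectively, of $S_n\times SL_2$) is determined up to isomorphism by its character. (Here $\As^2(n)$ is a subquotient of tensor powers of the two-dimensional space of generators, hence a polynomial representation of $GL_2$, and in particular a completely reducible rational $SL_2$-module.)

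For part (1), recall that the Frobenius characteristic of the regular representation $\mathbb{Q}S_n$ is $p_1^n$. The second formula of the theorem says that the characteristic of $\As^2(n)$ is $c_np_1^n$, which is the characteristic of $(\mathbb{Q}S_n)^{\oplus c_n}$; hence $\As^2(n)\cong(\mathbb{Q}S_n)^{\oplus c_n}$, i.e.\ it is a free $S_n$-module of rank $c_n$. (This also drops out of part (2) by forgetting the $SL_2$-action.)

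For part (2), I would work with the $SL_2$-equivariant characteristic from the third formula. Writing $\chi_{L(m)}(q)=q^m+q^{m-2}+\ldots+q^{-m}$ for the character of the $(m+1)$-dimensional irreducible $SL_2$-module $L(m)$, the $S_n\times SL_2$-characteristic of $\mathbb{Q}S_n\otimes L(m)$ is $p_1^n\chi_{L(m)}(q)$. Thus it suffices to decompose the Laurent polynomial
$$
P_n(q)=\sum_{k\ge0}N_{n,k}\,q^{\,n-1-2k}
$$
as a non-negative integral combination of the $\chi_{L(m)}(q)$. The first point is that $P_n$ is invariant under $q\mapsto q^{-1}$: the exponents run symmetrically from $n-1$ down to $-(n-1)$ in steps of $2$, and the identity $N_{n,k}=N_{n,n-1-k}$ (both sides equal $\frac1n\binom nk\binom n{k+1}$) matches up the coefficients, so $P_n$ is a genuine $SL_2$-character. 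Any such symmetric Laurent polynomial $\sum_j a_jq^j$ decomposes uniquely as $\sum_{m\ge0}d_m\chi_{L(m)}(q)$ with $d_m=a_m-a_{m+2}$; applied to $P_n$ this gives $d_{n-1-2k}=N_{n,k}-N_{n,k-1}$ (with the convention $N_{n,-1}=0$), which is precisely the list of multiplicities $1,\,N_{n,1}-1,\,N_{n,2}-N_{n,1},\ldots$ in the statement. Finally, the term $L(m)$ occurs only for $m=n-1-2k\ge0$, i.e.\ for $k\le(n-1)/2$, and on that range $N_{n,k}\ge N_{n,k-1}$ by unimodality of the Narayana numbers $N_{n,0},N_{n,1},\ldots,N_{n,n-1}$, so all multiplicities are non-negative integers. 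Comparing characteristics and invoking uniqueness of characters yields the claimed isomorphism of $S_n\times SL_2$-modules.

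The argument is essentially bookkeeping; the only steps needing genuine (if brief) verification are the $q\mapsto q^{-1}$ symmetry of $P_n(q)$, without which the third formula would not describe an honest $SL_2$-module, and the non-negativity of the multiplicities $N_{n,k}-N_{n,k-1}$ for $n-1-2k\ge0$, where unimodality of the Narayana numbers is used. I do not anticipate any real obstacle here: the substantive content is already packaged in the theorem, and the corollary merely translates it into the language of modules.
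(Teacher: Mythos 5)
Your proposal is correct and follows essentially the same route as the paper: both read the module structure off the third character formula of the theorem, using that the characters $\chi_{L(m)}(q)$ telescope so that the multiplicities are the successive differences $N_{n,k}-N_{n,k-1}$, and deduce part (1) by forgetting the $SL_2$-action. Your added checks of the $q\mapsto q^{-1}$ symmetry and of non-negativity of the multiplicities via unimodality of the Narayana numbers are worthwhile details the paper leaves implicit.
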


\begin{proof}
The $SL_2$-character of the module
 $$
L(n-1)+L(n-3)^{N_{n,1}-1}+L(n-5)^{N(n,2)-N(n,1)}+\ldots
 $$
is equal to 
 $$
q^{n-1}+N_{n,1}q^{n-3}+N_{n,2}q^{n-5}+\ldots+N_{n,k}q^{n-1-2k}+\ldots+N_{n,n-1}q^{1-n},
 $$
so the second statement follows. The first statement is obtained from the second one, if we forget about the $SL_2$-action.
\end{proof}

\begin{corollary}\label{dimfreealg}
The dimension of the $k^\text{th}$ component of the free $\As^2$-algebra generated by a vector space~$V$ is equal to $c_k(\dim V)^k$. In particular, the dimension of the $k^\text{th}$ graded component of the free $\As^2$-algebra with one generator is equal to the $k^\text{th}$ Catalan number.
\end{corollary}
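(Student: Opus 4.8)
The plan is to derive this as a formal consequence of the $S_n$-module structure of $\As^2(n)$ established above, so there is essentially nothing new to prove; I would argue along one of two equivalent routes.

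\textbf{Route via the $S_n$-module structure.} By the definition of a free algebra over an operad, the $k^\text{th}$ component of the free $\As^2$-algebra generated by $V$ is $\As^2(k)\otimes_{\k S_k}V^{\otimes k}$. By the corollary asserting that $\As^2(k)$ is a free $\k S_k$-module of rank $c_k$, we have $\As^2(k)\simeq(\k S_k)^{\oplus c_k}$ as an $S_k$-module, and hence
$$
\As^2(k)\otimes_{\k S_k}V^{\otimes k}\simeq(\k S_k)^{\oplus c_k}\otimes_{\k S_k}V^{\otimes k}\simeq\bigl(V^{\otimes k}\bigr)^{\oplus c_k},
$$
which has dimension $c_k(\dim V)^k$. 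Specialising $\dim V=1$ gives the Catalan number $c_k$.

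\textbf{Route via generating functions.} Alternatively, I would apply part~(3) of Remark~\ref{specialisations} with $\calV=\As^2$. The theorem above gives $F_{\As^2}(p_1,p_2,\ldots)=\sum_{n\ge1}c_np_1^n$, which depends on $p_1$ only; therefore the only substitution that matters in the plethystic specialisation is $p_1=x\dim V$, and
$$
f_{\As^2(V)}(x)=F_{\As^2}(p_1,\ldots)\big|_{p_i=x^i\dim V}=\sum_{n\ge1}c_n(\dim V)^nx^n,
$$
whose coefficient of $x^k$ is $\dim\bigl(\As^2(V)_k\bigr)=c_k(\dim V)^k$.

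The only step requiring any care is the bookkeeping in the first route --- the standard isomorphism $(\k S_k)\otimes_{\k S_k}W\simeq W$ for $W=V^{\otimes k}$, together with the passage to the rank-$c_k$ case --- but this is routine; equivalently, in the second route one needs only to observe that $F_{\As^2}$ is a power series in $p_1$ alone, which is precisely the statement that $\As^2(n)$ is $S_n$-free. Thus there is no genuine obstacle: the content of the corollary lies entirely in the $S_n$-freeness (equivalently, the Frobenius characteristic being a power of $p_1$) proved earlier.
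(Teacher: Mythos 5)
Your second route is exactly the paper's proof: it applies part~(3) of Remark~\ref{specialisations} to $\As^2(V)$, using that $F_{\As^2}$ is a power series in $p_1$ alone. Your first route is just an elementary unwinding of the same fact (the $S_k$-freeness of $\As^2(k)$) via $(\k S_k)^{\oplus c_k}\otimes_{\k S_k}V^{\otimes k}\simeq(V^{\otimes k})^{\oplus c_k}$, and both are correct.
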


\begin{proof}
This follows immediately from our previous results: we just apply the third formula of Remark~\ref{specialisations} to $\As^2(V)$ (the free $\As^2$-algebra generated by~$V$).
\end{proof}

\section{A monomial basis for $\As^2$}\label{monomial}

In this section we describe a monomial basis for $\As^2$. One can compare the methods and structure of this paragraph to the same in \cite{DK} in the case of the operad of two compatible Lie brackets.

\begin{definition}
Given a finite ordered set $$A\hm=\{a_1,a_2,\ldots,a_n\},$$ $a_1<a_2<\ldots<a_n$, define a family of monomials $\mathfrak{B}(A)$ in the free $\As^2$-algebra generated by $A$ recursively. Our recursive definition also assigns to a monomial $m$ its ``top level operation'' $t(m)\in\{1,2\}$, which is used to define further monomials.
\begin{itemize}
\item For $A=\{a_1\}$, let $\mathfrak{B}(A)=\{a_1\}$, and let $t(a_1)=1$.
\item For $n>1$, a monomial $b$ belongs to $\mathfrak{B}(A)$ if and only if it satisfies one of the two conditions:
\begin{enumerate}
\item $b=a_k\star_1 b'$, where $1\le k\le n$ and $b'\in\mathfrak{B}(A\setminus\{a_k\})$; in this case we put $t(b)=1$.
\item $b=b_1\star_2 b_2$, where $b_1\in\mathfrak{B}(A_1)$, $b_2\in\mathfrak{B}(A_2)$
for some $A_1 \sqcup A_2\hm=A$, and $t(b_1)=1$; in this case we put $t(b)=2$.
\end{enumerate}
\end{itemize}
\end{definition}

\begin{theorem}
The family of monomials $\mathfrak{B}(A)$ provides a basis for $\As^2(A)$. 
\end{theorem}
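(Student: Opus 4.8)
The plan is to establish two things: first, that $\mathfrak{B}(A)$ spans $\As^2(A)$, and second, that the cardinality of $\mathfrak{B}(A)$ is exactly $c_n (\dim \k A)^n$ divided by... no — since we are working with the multilinear part $\As^2(A)$ where $A$ is an $n$-element set, Corollary~\ref{dimfreealg} (or the first Corollary) tells us $\dim \As^2(A) = c_n$ when $|A| = n$. So it suffices to show $\mathfrak{B}(A)$ spans and that $|\mathfrak{B}(A)| = c_n$; linear independence then follows for free by a dimension count. I expect the counting step to be routine and the spanning step to be the crux.

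For the spanning statement, I would argue by induction on $n = |A|$. The base case $n=1$ is immediate. For the inductive step, one must show that every multilinear monomial in the generators $A$ can be rewritten, modulo the defining relations of $\As^2$ (the two associativities and the four-term relation~\eqref{compat}), as a linear combination of elements of $\mathfrak{B}(A)$. The key normalisation moves are: (i) using associativity of $\star_1$ to left-normalise, so that any monomial whose top operation is $\star_1$ can be brought to the form $a_k \star_1 b'$ with $b'$ a monomial on $A \setminus \{a_k\}$ — this matches condition~(1); (ii) using associativity of $\star_2$ to right-normalise $\star_2$-monomials, bringing them to the form $b_1 \star_2 b_2$ where $b_1$ is $\star_2$-indecomposable, i.e.\ $t(b_1)=1$ — matching condition~(2); and (iii) using the four-term relation~\eqref{compat} to handle the ``mixed'' obstruction, namely a subword of the shape $(u \star_1 v)\star_2 w$ or $u \star_2 (v \star_1 w)$ that prevents the previous two reductions from terminating. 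The four-term relation should be read as a rewriting rule that trades such a mixed subword for terms that are ``smaller'' in an appropriate well-founded order (for instance, ordering monomials by the position/depth of the first $\star_2$ above a $\star_1$, or by a suitable lexicographic statistic on the planar tree shape). One must check that these three families of moves, applied repeatedly, terminate and produce only elements of $\mathfrak{B}(A)$.

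The main obstacle, as usual with monomial bases for operads, is confluence/termination of the rewriting: one has to exhibit an explicit well-founded order on multilinear monomials that strictly decreases under each of the rewriting moves (i)--(iii), with particular care for~(iii) since the four-term relation replaces one monomial by three others and it is not a priori obvious that all three are smaller. A clean way to organise this is to use the filtration by ``number of $\star_2$'s that sit above some $\star_1$'' as the primary statistic, and the associativity normalisations as secondary moves that do not increase it; the four-term relation should then be arranged to strictly decrease the primary statistic. Once termination is secured, the spanning follows.

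For the counting step: I would show $|\mathfrak{B}(A)|$ satisfies the Catalan recursion. Writing $b_n = |\mathfrak{B}(A)|$ for $|A| = n$, condition~(1) contributes, for each choice of $a_k$ (there are $n$ of them but in the multilinear setting each contributes a fixed count independent of which letter), a term $b_{n-1}$; condition~(2) contributes $\sum b_{|A_1|} b'_{|A_2|}$ over ordered set-partitions, where $b'_m$ counts monomials on $m$ letters with $t = 1$. One then sets up the two-variable generating-function identity linking $b_n$ and $b'_n$ and checks it coincides with the functional equation $\frac{x}{(1-x)\cdot(\text{something})}$ satisfied by $\sum c_n x^n$; equivalently, one verifies directly that the obvious bijection sends $\mathfrak{B}(A)$ to planar binary trees with $n$ leaves whose internal vertices are coloured by $\{1,2\}$ subject to the "$t(b_1)=1$ at every $\star_2$" constraint, and that such coloured trees are counted by $c_n$. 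Combining spanning with $|\mathfrak{B}(A)| = c_n = \dim \As^2(A)$ yields that $\mathfrak{B}(A)$ is a basis.
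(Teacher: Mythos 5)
Your overall strategy---show $\mathfrak{B}(A)$ spans and then count it against the known dimension---is exactly the paper's. But the crux of the spanning step goes wrong in two concrete ways. First, you have misidentified the obstruction that the four-term relation must remove. Both shapes you name, $(u\star_1 v)\star_2 w$ and $u\star_2(v\star_1 w)$, are \emph{consistent} with the normal form: condition~(2) of the definition explicitly allows a $\star_2$-product whose left factor is a $\star_1$-product of type~(1), and right-nesting of $\star_2$ over such factors is exactly what type~(2) elements look like. After the two associativity normalisations, the only genuinely bad shape is $(u\star_2 v)\star_1 w$ --- a $\star_1$-product whose left factor is a $\star_2$-product, which violates condition~(1) and which no associativity can repair. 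Worse, applying~\eqref{compat} to the shapes you single out \emph{produces} the term $(u\star_2 v)\star_1 w$, i.e.\ it creates the real obstruction instead of eliminating it, so the rewriting you describe has no reason to terminate. Second, even aimed at the right shape, your proposed primary statistic (``number of $\star_2$'s sitting above some $\star_1$'') does not visibly decrease on all three output terms of the relation, and you leave termination as an acknowledged open point. The paper closes this gap with an explicit well-founded order: rewrite $(m_1\star_2 m_2)\star_1 m_3$ as $m_1\star_2(m_2\star_1 m_3)+m_1\star_1(m_2\star_2 m_3)-(m_1\star_1 m_2)\star_2 m_3$ and order monomials lexicographically by (degree of the left-hand factor, number of $\star_2$'s inside that left-hand factor); the first two terms drop in the first coordinate and the third drops in the second.

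On the counting side, note also a numerical slip: for $|A|=n$ the multilinear component satisfies $\dim\As^2(A)=n!\,c_n$ (the corollary says $\As^2(n)$ is \emph{free of rank} $c_n$ as an $S_n$-module), so the target count is $|\mathfrak{B}(A)|=(2n)!/(n+1)!$, not $c_n$. Your exponential-generating-function setup, with the system linking $\beta$ and $\beta_1$ via $\beta_{1,n}=n\beta_{n-1}$ and $\mathfrak{B}_2=\As\circ\mathfrak{B}_1$, is the right (and the paper's) way to get this; the appeal to $2$-coloured planar binary trees being ``counted by $c_n$'' would need the $n!$ labellings and the colouring constraint made precise before it could substitute for that computation.
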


\begin{proof}
We shall prove that this family spans the component $\As^2(A)$, and that its number of elements is equal to the dimension of this component. It will follow that it has to be a basis. 

\begin{lemma}
The family of monomials $\mathfrak{B}(A)$ spans $\As^2(A)$.
\end{lemma}

\begin{proof}
Consider a monomial $m$. It is a product of two monomials, and by induction we can assume that they both belong to families $\mathfrak{B}(A')$ for some sets $A'\subset A$. Using the associativity property for each of the products, we are left with only one case in which $m$ does not belong to $\mathfrak{B}(A)$, namely
 $$
m=(m_1\star_2 m_2)\star_1 m_3
 $$ 
for some $m_1$, $m_2$, $m_3$. In this case, we use the compatibility relation \ref{compat}:
 $$
m=m_1\star_2(m_2\star_1 m_3)+m_1\star_1(m_2\star_2 m_3)-(m_1\star_1 m_2)\star_2 m_3,
 $$
which shows that we can proceed by induction: in the first two summands, the degree of the first factor has decreased, and the last summand has fewer products of the second type in its first factor.
\end{proof}

\begin{lemma}
The number of elements in $\mathfrak{B}(A)$ is equal to $\frac{(2|A|)!}{(|A|+1)!}$. 
\end{lemma}

\begin{proof}
Let $\beta_n=|\mathfrak{B}([n])|$. Moreover, for $i=1,2$ let $\beta_{i,n}=|\mathfrak{B}_i([n])|$, where $\mathfrak{B}_i([n])$ is the set of all monomials $b\in\mathfrak{B}([n])$ with $t(b)=i$. We use exponential generating functions again:
 $$
\beta(x)\hm=\sum_{l\ge1}\frac{\beta_lx^l}{l!}, \quad \beta_{i}(x)\hm=\sum_{l\ge1}\frac{\beta_{i,l}x^l}{l!},
 $$
The first condition implies that
 $$
\beta_{1,n+1}=(n+1)\beta_n,
 $$
which can be rewritten as 
\begin{equation}\label{eqbeta}
\beta_1(x)-x=x\beta(x).
\end{equation}
The definition of $\mathfrak{B}([n])$ basically means that on the level of $\mathbb{S}$-modules,
 $$
\mathfrak{B}_2(n)=(\As\circ\mathfrak{B}_1)(n)
 $$ 
for $n\ge 2$, so
 $$
\beta_2(x)=(f_{\As}(x)-x)\circ\beta_1(x).
 $$
Let us rewrite this equation using the formulae 
 $$\beta_1(x)+\beta_2(x)=\beta(x)$$ and $$f_{\As}(x)=\frac{x}{1-x}.$$ 
We get 
\begin{equation}\label{as-circ-beta1}
\beta(x)=\frac{\beta_1(x)}{1-\beta_1(x)}.
\end{equation}
This formula can be written in the form $\beta_1(x)=\frac{\beta(x)}{1+\beta(x)}$. Now we can substitute it into \eqref{eqbeta}, and get the equation
 $$
\frac{\beta(x)}{1+\beta(x)}=x(1+\beta(x)),
 $$
which coincides with the functional equation for $f_{\As^2}(x)$ obtained from \eqref{FuncAs} by setting $q=1$.
\end{proof}
\end{proof}

\section{Free algebras over $\As^2$}\label{freealg}

In this section, we prove that any free algebra with two compatible products is free as an associative algebra. Let us recall a theorem due to Chapoton \cite{Chap} that is one of the main ingredients in our proof.

\subsection{Chapoton's theorem}

Let $\calP$ be an operad. Assume that $\calP(1)=\mathbb{Q}$, and let $\calP^+$ be the
$\mathbb{S}$-module such that $\calP=\calP(1)\oplus\calP^+$. Let $\calA$ be a $\calP$-algebra
in the category of $\mathbb{S}$-modules. The structure of a $\calP$-algebra on $\calA$ 
is given by a morphism $\mu\colon\calP\circ\calA\to\calA$.

Let us define a decreasing $\calP$-algebra filtration of $\calA$: for each $k\ge 0$ we a define a subspace $\calA_{\geq k}$ of $\calA$. Let
$\calA_{\geq 0}$ be $\calA$, and for $k>0$ let $\calA_{\geq k}$ be the image under $\mu$ of $\calP^+ \circ\calA_{\geq k-1}$.

We will assume that this filtration is separating, which is true, for instance, if $\calA$ has a grading concentrated in positive degrees.

Let us define $H_0(\calA)$ to be the degree $0$ component $\calA_{\ge 0}/\calA_{\ge 1}$ of the associated graded $\calP$-algebra $\gr\calA$.

Let us choose a section of $H_0(\calA)$ in $\calA$. Consider $\calP(H_0(\calA))$, that is the free $\calP$-algebra generated by $H_0(\calA)$. Then there exists a unique morphism $\theta$ of $\calP$-algebras from $\calP(H_0(\calA))$ to $\calA$ extending the chosen section.

\begin{theorem}[\cite{Chap}]
The morphism $\theta$ is surjective. Therefore, if dimensions (or graded characters) of $\calA$ and $\calP(H_0(\calA))$ are equal, then $\theta$ is an isomorphism.
\end{theorem}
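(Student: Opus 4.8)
The plan is to show surjectivity of $\theta$ by a filtration argument, exploiting the fact that $\theta$ is a morphism of $\calP$-algebras and that the filtration $\calA_{\ge k}$ is, by construction, generated from $H_0(\calA)$ by repeatedly applying operations from $\calP^+$. First I would put the natural $\calP$-algebra filtration on the free algebra $\calP(H_0(\calA))$ as well: writing $\calB=\calP(H_0(\calA))$, define $\calB_{\ge 0}=\calB$ and $\calB_{\ge k}$ to be the image of $\calP^+\circ\calB_{\ge k-1}$ under the structure map. Since $\theta$ is a $\calP$-algebra morphism and sends the chosen section of $H_0(\calA)$ to itself, a straightforward induction on $k$ gives $\theta(\calB_{\ge k})\subseteq\calA_{\ge k}$, and in fact $\theta$ induces maps $\gr_k\calB\to\gr_k\calA$ on associated graded pieces.

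The heart of the argument is then to prove that $\theta$ is surjective on each graded piece $\calA_{\ge k}/\calA_{\ge k+1}$ by induction on $k$. For $k=0$ this is exactly the statement that the section generates $H_0(\calA)=\calA_{\ge 0}/\calA_{\ge 1}$, which holds by the very choice of the section. For the inductive step, an arbitrary element of $\calA_{\ge k}$ is, by definition of the filtration, a sum of terms $\mu(\gamma\otimes(a_1\otimes\cdots\otimes a_m))$ with $\gamma\in\calP^+(m)$ and $a_i\in\calA_{\ge k_i}$ with $\sum k_i\ge k-1$; since the filtration is a $\calP$-algebra filtration, one can arrange the weights so that the total is exactly the relevant degree. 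By the induction hypothesis each $a_i$ lies in the image of $\theta$ modulo $\calA_{\ge k_i+1}$, say $a_i=\theta(b_i)+(\text{higher})$. Applying the corresponding operation inside $\calB$ and using that $\theta$ commutes with $\calP$-operations, we get $\mu(\gamma\otimes(a_1\otimes\cdots\otimes a_m))\equiv\theta\big(\gamma\cdot(b_1\otimes\cdots\otimes b_m)\big)$ modulo $\calA_{\ge k+1}$. Summing over all terms shows the given element is in the image of $\theta$ modulo $\calA_{\ge k+1}$, completing the induction.

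Finally, surjectivity on the nose follows from surjectivity on all graded pieces together with the hypothesis that the filtration is separating: given $a\in\calA$, inductively subtract off elements of $\theta(\calB)$ to push $a$ into $\calA_{\ge k}$ for arbitrarily large $k$; separatedness (e.g.\ guaranteed when $\calA$ is positively graded, since then $\calA_{\ge k}$ vanishes in each fixed degree for $k$ large) forces the remaining difference to be zero, so $a\in\theta(\calB)$. The last sentence of the theorem is then immediate: a surjection between $\mathbb{S}$-modules (or graded vector spaces) with equal dimensions — equivalently equal graded characters, which refine dimensions — in every arity and degree is necessarily an isomorphism.

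I expect the main obstacle to be purely bookkeeping rather than conceptual: namely, being careful that the composition $\calP^+\circ\calA_{\ge k-1}$ genuinely captures \emph{all} of $\calA_{\ge k}$ with the correct weight decomposition, so that the inductive decomposition of an arbitrary element has summands whose pieces $a_i$ really do lie in strictly lower filtration (allowing the induction hypothesis to apply). This requires unwinding the definition of the monoidal composition $\circ$ and of the iterated structure map, and checking that "image of $\calP^+\circ\calA_{\ge k-1}$" is stable under the $\calP$-action in the way needed — essentially that $\gr\calA$ is a well-defined $\calP$-algebra, which is where the hypothesis that the filtration is a $\calP$-algebra filtration does the work.
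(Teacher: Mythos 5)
The paper does not actually prove this statement --- it is quoted verbatim from Chapoton's article \cite{Chap} --- so there is no in-paper argument to compare yours against, and I am assessing the proposal on its own terms. Your overall strategy (transport the filtration to $\calB=\calP(H_0(\calA))$, prove surjectivity modulo each stage of the filtration by induction starting from the tautological base case on $H_0$, then conclude degree by degree using separatedness, and finally deduce the isomorphism from equality of finite dimensions in each arity and degree) is the right one and is essentially Chapoton's. The base case, the use of the positive grading to make separatedness bite, and the final counting step are all fine.

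The one genuine gap is precisely the point you flag at the end, and it is not mere bookkeeping. Your congruence $\mu(\gamma\otimes a_1\otimes\cdots\otimes a_m)\equiv\theta\bigl(\gamma(b_1,\ldots,b_m)\bigr)$ modulo $\calA_{\ge k+1}$ requires that every cross term, in which one slot carries an element of $\calA_{\ge k_i+1}$ while the others carry elements only known to lie in $\calA_{\ge k_j}$, lands in $\calA_{\ge k+1}$; that is, you need the multiplicativity $\mu\bigl(\calP^+(m)\otimes\calA_{\ge l_1}\otimes\cdots\otimes\calA_{\ge l_m}\bigr)\subseteq\calA_{\ge 1+l_1+\cdots+l_m}$. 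For the filtration as literally defined (each $\calA_{\ge k}$ is the image of $\calP^+\circ\calA_{\ge k-1}$, so \emph{every} input is taken from $\calA_{\ge k-1}$) this is false: for $\calP=\As$ and $\calA$ the free algebra on one generator $a$ one finds $\calA_{\ge 1}$ spanned by $a^n$ with $n\ge 2$ and $\calA_{\ge 2}$ spanned by $a^n$ with $n\ge 4$, yet $a^2\cdot a=a^3$ lies in $\mu(\calP^+(2)\otimes\calA_{\ge 1}\otimes\calA_{\ge 0})$ and your rule would place it in $\calA_{\ge 2}$. Note that your own description of $\calA_{\ge k}$ as sums of $\gamma(a_1,\ldots,a_m)$ with $a_i\in\calA_{\ge k_i}$ and $\sum k_i\ge k-1$ is not the recursive definition but rather the weight filtration $\calA^{(N)}$, the span of all products of at least $N$ elements of $\calA$. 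The repair is to run the induction on that filtration instead: multiplicativity then holds by construction, every cross term drops at least one step, and one obtains $\calA=\theta(\calB)+\calA^{(N)}$ for all $N$; since under the positive-grading hypothesis $\calA^{(N)}$ vanishes in each fixed degree for $N$ large, surjectivity follows. With that substitution your argument closes; without it, the central inductive step is unjustified as written.
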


\subsection{Free algebras with two compatible products are free}

\begin{theorem}\label{freearefree}
Free algebras with two compatible brackets are free as associative algebras. 
\end{theorem}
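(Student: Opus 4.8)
The plan is to apply the theorem of Chapoton recalled above with $\calP=\As$, taking for the $\calP$-algebra $\calA$ the operad $\As^2$ itself, regarded as an $\As$-algebra in the category of $\mathbb{S}$-modules via the operad morphism $\As\to\As^2$ that classifies $\star_1$. (The associated filtration is separating: one checks that $\calA_{\ge k}(n)=0$ as soon as $n<2^{k}$, since $\As^+$ contributes only in arities $\ge 2$; equivalently, one may run the whole argument on the free $\As^2$-algebra on a fixed finite-dimensional vector space, which is positively graded by word length.) With this $\calA$ we have $\calA_{\ge 1}=\mu(\As^+\circ\calA)$, so $\calA_{\ge 1}(n)$ is the span of all $\star_1$-products of length $\ge 2$ in $\As^2(n)$, and $H_0:=H_0(\calA)$ is the $\mathbb{S}$-module of ``$\star_1$-indecomposables''. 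Chapoton's theorem then gives a surjection of $\As$-algebras $\theta\colon\As\circ H_0\to\As^2$ which is an isomorphism as soon as $\dim(\As\circ H_0)(n)=\dim\As^2(n)$ for all $n$. So the whole problem reduces to determining $\dim H_0(n)$.

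By the computation of Section~\ref{chi-calc}, $\dim\As^2(n)=c_{n}\,n!$, that is $f_{\As^2}(x)=C(x)-1$, where $C(x)=\sum_{n\ge 0}c_nx^n$ is the Catalan series, satisfying $C=1+xC^2$. Since $f_{\As\circ H_0}(x)=f_{H_0}(x)/(1-f_{H_0}(x))$, and since $C=1+xC^2$ gives $xC/(1-xC)=C-1$, the equality we need is equivalent to $f_{H_0}(x)=xC(x)$, i.e.\ to $\dim H_0(n)=c_{n-1}\,n!$. Surjectivity of $\theta$ already yields $\dim(\As\circ H_0)(n)\ge\dim\As^2(n)$; and since $y\mapsto y/(1-y)$ is coefficientwise monotone on power series with non-negative coefficients and zero constant term, it is enough to establish the upper bound $\dim H_0(n)\le c_{n-1}\,n!$.

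For the upper bound I would work with the monomial basis $\mathfrak{B}([n])$ of Section~\ref{monomial}. A monomial with $t=1$ has the form $a\star_1 b'$, hence is a $\star_1$-product and lies in $\calA_{\ge 1}$; so $H_0(n)$ is spanned by the images of the monomials with $t=2$, which are of the form $b_1\star_2 b_2$ with $t(b_1)=1$ (the arity-$1$ part of $H_0$ is just $\As^2(1)$, of dimension $1=c_0\cdot 1!$). If $b_1$ is a single leaf, then $b=a\star_2 b_2$ is already of the desired shape; otherwise $b_1=a\star_1 b_1'$ with $a$ a leaf, and the compatibility relation~\eqref{compat} gives
$$(a\star_1 b_1')\star_2 b_2=a\star_1(b_1'\star_2 b_2)+a\star_2(b_1'\star_1 b_2)-(a\star_2 b_1')\star_1 b_2,$$
whose first and last terms are $\star_1$-products, hence lie in $\calA_{\ge 1}$. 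Thus modulo $\calA_{\ge 1}$ the monomial $b$ is congruent to $a\star_2 w$ with $w=b_1'\star_1 b_2\in\As^2$; expanding $w$ in the basis $\mathfrak{B}$, we conclude that modulo $\calA_{\ge 1}$ the space $\As^2(n)$ is spanned by the monomials $a\star_2 m$ with $a$ a single leaf and $m\in\mathfrak{B}([n]\setminus\{a\})$. These form a sub-family of $\mathfrak{B}([n])$ of cardinality $n\cdot\dim\As^2(n-1)=n\cdot c_{n-1}(n-1)!=c_{n-1}\,n!$, so $\dim H_0(n)\le c_{n-1}\,n!$, as required.

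Combining the two bounds gives $\dim(\As\circ H_0)(n)=\dim\As^2(n)$ for all $n$, so by Chapoton's theorem $\theta$ is an isomorphism $\As^2\cong\As\circ H_0$ of $\As$-algebras. Applying the functor $-\circ V$, the free $\As^2$-algebra on a vector space $V$ becomes $\As^2\circ V\cong\As\circ(H_0\circ V)=\As(H_0(V))$, the free associative algebra (under $\star_1$) on the graded vector space $H_0(V)=\bigoplus_n H_0(n)\otimes_{S_n}V^{\otimes n}$; this proves the theorem. The step I expect to be the genuine obstacle is the upper bound of the third paragraph: checking that a single application of the compatibility relation~\eqref{compat} suffices to push every $t=2$ monomial into the span of the special monomials $a\star_2 m$ modulo $\star_1$-products, together with the associated bookkeeping (the arity-$1$ term and the reduction of $w$ to the basis); the remaining steps are generating-function manipulations resting on the already-established value $\dim\As^2(n)=c_n\,n!$.
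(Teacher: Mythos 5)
Your proof is correct and follows essentially the same route as the paper: Chapoton's theorem applied to $\As^2$ viewed as an $\As$-algebra in $\mathbb{S}$-modules, with the dimension of $H_0$ extracted from the monomial basis of Section~\ref{monomial} and matched against the Catalan generating function. The only (immaterial, by the $\star_1\leftrightarrow\star_2$ symmetry of the operad) difference is that you filter by $\star_1$ rather than $\star_2$, which costs you one extra application of the compatibility relation to exhibit a spanning set of $H_0$, whereas the paper's choice identifies $H_0$ directly with the span of the $t(b)=1$ basis elements via equation~\eqref{as-circ-beta1}.
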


\begin{proof}
Let us first prove that there exists an $\mathbb{S}$-module $\calG$ such that the $\mathbb{S}$-modules $\As^2$ and $\As\circ\calG$ are isomorphic. To do that, we apply Chapoton's result in the case $\calP=\As$ and $\calA=\As^2$, where the $\calP$-algebra structure is given by the second product. Thus, Chapoton's theorem suggests that we should put $\calG:=H_0(\calA)$, so that in order to prove our theorem, we only need to prove that graded characters of $\As^2$ and $\As\circ\calG$ are equal. This is guaranteed by the next lemma.

\begin{lemma}
\begin{enumerate}
 \item For each component of $\As^2$, the part of its monomial basis consisting of elements $b$ with $t(b)=1$ can be taken as a lift $\theta\colon\calG\to\As^2$; 
 \item $f_{\As^2}(x)=f_{\As}\circ f_{\calG}(x)$.
\end{enumerate}
\end{lemma}
 
\begin{proof}
From our proof of the spanning property, it follows that any monomial for which the top level operation is the second product belongs to the subspace spanned by all basis elements $b$ with $t(b)=2$, so the quotient by the space of all such monomials is identified with the complementary subspace.

Also, the equation \eqref{as-circ-beta1} is
 $$
f_{\As^2}(x)=f_{\As}\circ\beta_1(x),
 $$
which is exactly what our second statement claims.
\end{proof}

Now we are ready to prove our theorem. For a vector space $V$,
 $$
\As^2(V)\simeq\As(\calG(V)),
 $$ 
so the free $\As^2$-algebra with generators $V$ is isomorphic to the free associative algebra with generators $\calG(V)$.
\end{proof}

\section{Labeled rooted trees and compatible products}\label{comb-oper}

A planar rooted tree is an abstract rooted tree with a linear order on the set of children of every vertex. Alternatively, one can imagine a tree embedded into the plane in such a way that all children of any vertex~$v$ have their $y$-coordinates less than the $y$-coordinate of~$v$ (in this case, the linear order appears from reading the outgoing edges from left to right). 

\begin{proposition}[\cite{Stan2}]
The number of planar rooted trees with $n+1$ vertices is equal to the Catalan number $c_n$. 
\end{proposition}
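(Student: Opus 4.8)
The plan is to prove that planar rooted trees with $n+1$ vertices are counted by the Catalan number $c_n = \frac{1}{n+1}\binom{2n}{n}$ by setting up a functional equation for the ordinary generating function and recognizing the solution.

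First I would let $T(x) = \sum_{n\ge 0} t_n x^n$, where $t_n$ is the number of planar rooted trees with $n+1$ vertices (so a tree consisting of a single root vertex contributes $t_0 = 1$). The key structural observation is that a planar rooted tree is determined by its root together with the ordered sequence (possibly empty) of planar rooted subtrees hanging off the root's children, read left to right. If the root has $k$ children, and the subtrees rooted at those children have $m_1+1, \ldots, m_k+1$ vertices respectively, then the whole tree has $1 + \sum_i (m_i + 1)$ vertices; tracking the exponent $n$ (one less than the vertex count), removing the root shifts things so that an ordered forest of subtrees of total vertex count $n$ contributes $x^n$ to the $x$-power minus one adjustment for the root. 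Carefully: a tree with $n+1$ vertices has a root plus a left-to-right list of subtrees whose vertex counts sum to $n$; a subtree with $j+1$ vertices is weighted $x^{j+1}$ if we want the exponent to equal its vertex count, so the list of subtrees is weighted by $\prod x^{j_i + 1}$ summing to $x^n$, and then the root is free. This gives $T(x) = \sum_{k\ge 0} (xT(x))^k = \frac{1}{1 - xT(x)}$, i.e. the functional equation $T(x) = 1 + xT(x)^2$, after rearranging $T(x)(1 - xT(x)) = 1$.

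Next I would observe that $T(x) = 1 + xT(x)^2$ is precisely the defining equation of the ordinary generating function for the Catalan numbers, $C(x) = \sum_{n\ge 0} c_n x^n$, which is the well-known identity $C(x) = 1 + xC(x)^2$ with $c_0 = 1$. Since this quadratic equation, together with the initial condition $t_0 = 1$ (equivalently, that $T$ is a power series with constant term $1$), determines $T$ uniquely as a formal power series, we conclude $T(x) = C(x)$ and hence $t_n = c_n$ for all $n\ge 0$. In particular the number of planar rooted trees with $n+1$ vertices equals $c_n$.

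The main obstacle is not any difficult computation but rather the bookkeeping of the index shift: one must be careful that ``$n+1$ vertices'' corresponds to the coefficient of $x^n$, and that removing the root converts a tree into an ordered forest whose total vertex count is $n$, each subtree of size $j+1$ contributing $x^{j+1}$ so that the forest contributes $x^n$. Getting the exponent accounting exactly right is what produces the clean equation $T = 1 + xT^2$ rather than a shifted variant; once that is in place, the identification with the Catalan generating function is immediate. (Alternatively, one could cite the standard bijection between planar rooted trees with $n+1$ vertices and, say, triangulations of an $(n+2)$-gon or Dyck paths of length $2n$, but the generating-function argument is self-contained and is the approach I would present.)
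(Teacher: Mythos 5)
Your argument is correct. The paper does not actually prove this proposition --- it simply cites Stanley's \emph{Enumerative Combinatorics} --- so there is nothing to compare against beyond the standard literature. Your generating-function derivation is the classical one: removing the root exhibits a planar rooted tree as an ordered (possibly empty) forest of planar rooted subtrees, giving $T(x)=\sum_{k\ge0}(xT(x))^k=1/(1-xT(x))$, hence $T=1+xT^2$, and the index bookkeeping (a tree with $n+1$ vertices carrying weight $x^n$, so each subtree contributes a factor $xT(x)$) is handled correctly. Together with the observation that this quadratic equation and the normalization $t_0=1$ determine the power series uniquely via the Catalan recurrence $t_n=\sum_{i+j=n-1}t_it_j$, the identification $T=C$ is complete; this is exactly the proof one would find in the cited reference.
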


Thus, if we consider the planar rooted trees with $k+1$ vertices equipped with a labelling of all non-root vertices  by elements of some finite set $S$, the number of these objects is equal to $c_k(\# S)^k$, which is, by Corollary~\ref{dimfreealg}, equal to the dimension of the $k^\text{th}$ component of the free $\As^2$-algebra generated by~$S$. In the remaining part of this section, we show that this fact is not a mere coincidence. Namely, we define two compatible associative products on the linear span of all planar rooted trees with $S$-labelled non-root vertices, and show that this linear span is free as an $\As^2$-algebra. 

Denote by $\RT(S)$ the collection of all planar rooted trees whose non-root vertices are labelled by elements of a finite set $S$ (possibly with repeated labels). We start by defining several operations on the linear span $\mathbb{Q}\RT(S)$. 

\begin{definition}
Let $T_1,T_2\in\RT(S)$.
Define the tree $T_1\cdot T_2$ as the tree obtained by identifying the roots of $T_1$ and $T_2$; the linear ordering of the children of this vertex is uniquely defined by the condition that all children coming from $T_1$ preceed all children coming from $T_2$. This operation is associative, and every $T\in\RT(S)$ can be uniquely decomposed as $T=T[1]\cdot T[2]\cdot\ldots\cdot T[k]$, where for each tree $T(j)$ its root has only one child.
\end{definition}

Let us denote by $\Vertices(T)$ the set of all vertices of a tree~$T\in\RT(S)$ (including the root), and by $\Int(T)$ the set of all internal vertices of~$T$.

\begin{definition}
Let $T_1,T_2\in\RT(S)$. Assume that the root of $T_1$ has $k$ children (as in the above definition).
\begin{enumerate}
 \item  To every mapping $f\colon[k]\to\Vertices(T_2)$ we assign a new tree $T_1\circ^f T_2$ which is obtained as follows. For each $v\in\Vertices(T_2)$ we let $$f^{-1}(v)=\{i_1<\ldots<i_s\},$$ form a tree $T_1[i_1]\cdot\ldots\cdot T_1[i_s]$, and identify the root of this tree with the vertex $v$ (keeping the label of $v$) in a way that all children of this tree are placed left of all the children of $v$ in~$T_2$.
 \item  To every mapping $g\colon[k]\to\Int(T_2)$ we assign a new tree $T_1\circ_g T_2$ which is obtained as follows.
For each $v\in\Int(T_2)$ we let $$g^{-1}(v)=\{j_1<\ldots<j_r\},$$ form a tree $T_1[j_1]\cdot\ldots\cdot T_1[j_r]$, and identify the root of this tree with the vertex $v$ (keeping the label of $v$) in a way that all children of this tree are placed left of all the children of $v$ in~$T_2$.
\end{enumerate}
\end{definition}

Let us now define two products on $\mathbb{Q}\RT(S)$.

\begin{definition}
Let $T_1,T_2\in\RT(S)$. Assume that the root of $T_1$ has $k$ children. Define the products $T_1\star_1 T_2$ and $T_1\star_2 T_2$ as follows:
\begin{gather}
T_1\star_1 T_2=\sum_{f\colon[k]\to\Vertices(T_2)}T_1\circ^f T_2,\label{star1}\\
T_1\star_2 T_2=\sum_{g\colon[k]\to\Int(T_2)}T_1\circ_g T_2\label{star2}.
\end{gather}
\end{definition}

\begin{example}
For the trees 
 $$
\includegraphics{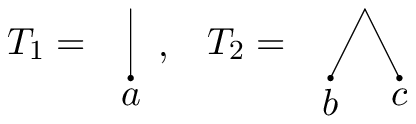}
 $$ 
the product $T_1\star_1 T_2$ is equal to
 $$
\includegraphics{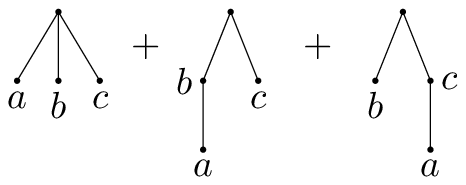}
 $$ 
while the product $T_1\star_2 T_2$ is equal to
 $$
\includegraphics{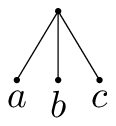}
 $$ 
 
\end{example}

\begin{theorem}\label{treefreealg}
\begin{enumerate} 
 \item The products $\star_1$ and $\star_2$ are associative and compatible with each other.
 \item The $\As^2$-algebra $\mathbb{Q}\RT(S)$ is isomorphic to the free $\As^2$-algebra generated by~$S$.
\end{enumerate}
\end{theorem}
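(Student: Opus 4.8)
The plan is to prove the two assertions in order, as they build on one another.

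\textbf{Step 1: associativity and compatibility.} First I would verify directly that $\star_1$ and $\star_2$ are associative and satisfy~\eqref{compat}. The natural way to organize the bookkeeping is to observe that a product $T_1\star_i T_2$ is governed by ``grafting the connected components $T_1[1],\dots,T_1[k]$ of $T_1$ onto chosen vertices of $T_2$'', and when we iterate, say in $(T_1\star_i T_2)\star_j T_3$, what we are really summing over is the set of ways to graft the components of $T_1$ and of $T_2$ onto a common larger tree. The key combinatorial fact is that grafting is a ``monad-like'' operation whose compositions are controlled by ordered set partitions of $[k]$ indexed by vertices, so that both sides of an associativity identity end up being indexed by the same set of decorated grafting data; the left-to-right placement conventions in the definition of $\circ^f$ and $\circ_g$ are precisely what make the two orders of grafting agree. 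For compatibility, the point is that $\Vertices(T)=\Int(T)\sqcup\{\text{leaves}\}$, and expanding the four terms of~\eqref{compat} in terms of where the components of $a$ and of $b$ land, each monomial of the form ``some components land on internal vertices, some on leaves'' appears with total coefficient zero except those accounted for on both sides; I expect a short inclusion--exclusion argument here. This step is routine but notation-heavy.

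\textbf{Step 2: the comparison map and surjectivity.} By the universal property of the free $\As^2$-algebra, sending the one-vertex generator $s\in S$ (the tree with root and a single child labelled $s$) to the corresponding element of $\mathbb{Q}\RT(S)$ extends uniquely to a morphism of $\As^2$-algebras $\Phi\colon \As^2(S)\to\mathbb{Q}\RT(S)$. By Corollary~\ref{dimfreealg}, the $k^{\text{th}}$ graded component of $\As^2(S)$ has dimension $c_k(\#S)^k$, which by the Proposition of Stanley cited above equals the number of planar rooted trees with $k+1$ vertices whose $k$ non-root vertices are $S$-labelled, i.e.\ the dimension of the corresponding graded piece of $\mathbb{Q}\RT(S)$. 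So it suffices to prove that $\Phi$ is surjective (equivalently, injective).

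\textbf{Step 3: surjectivity via the monomial basis.} To prove surjectivity I would show that every tree $T\in\RT(S)$ lies in the image, by induction on the number of vertices. Write $T=T[1]\cdot T[2]\cdots T[k]$ with each $T[j]$ having a root with a single child; this reduces the problem, via the product $\star_1$ applied to a one-vertex tree, to hitting trees whose root has a single child. For such a tree, removing the root leaves a smaller tree $T'$ with a distinguished ``new root'', and $\Phi(s\star_1 x)$ for suitable $x$ produces $T$ plus trees in which the single child of the root has been grafted deeper; by downward induction on that depth one peels these off. Alternatively, and more cleanly, I would match the monomial basis $\mathfrak{B}(S)$ of Section~\ref{monomial} against trees directly: a monomial $a_{k}\star_1 b'$ corresponds to attaching a new vertex labelled $a_k$ as a child of the root, and a monomial $b_1\star_2 b_2$ with $t(b_1)=1$ corresponds to grafting the tree of $b_1$ onto the (necessarily internal) top vertex of the tree of $b_2$ --- this recursion on trees exactly mirrors the recursion defining $\mathfrak{B}$, giving a bijection between $\mathfrak{B}(S)$ and $\RT(S)$ under which $\Phi$ sends basis elements to trees, up to strictly ``lower'' terms, so that $\Phi$ is surjective.

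\textbf{Main obstacle.} The genuinely delicate point is Step~3: showing that the leading term of $\Phi$ applied to a monomial basis element is the corresponding tree, with all other terms expressible by induction. The products $\star_1,\star_2$ are \emph{sums} over many graftings, so $\Phi$ of a basis monomial is not a single tree, and one must set up the right partial order on $\RT(S)$ (by ``total depth of the grafted components'', or lexicographically on where components land) in which $\Phi(b)=T_b+(\text{strictly lower terms})$; verifying that the correction terms are themselves images of earlier basis elements is where the bulk of the work lies. Once triangularity is established, equality of dimensions from Corollary~\ref{dimfreealg} upgrades surjectivity to an isomorphism, proving~(2); and~(1) has already been checked, so $\mathbb{Q}\RT(S)$ is a bona fide $\As^2$-algebra on which this makes sense.
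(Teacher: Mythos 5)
Your overall architecture is the same as the paper's: check associativity and compatibility by hand via the combinatorics of graftings, then observe that the graded dimensions of $\mathbb{Q}\RT(S)$ match those of the free algebra (Corollary~\ref{dimfreealg} together with the Catalan count of planar rooted trees), so that it suffices to prove the canonical morphism from the free algebra is surjective, i.e.\ that $\mathbb{Q}\RT(S)$ is generated in degree~$1$. Steps 1 and 2 are fine; for compatibility the paper does exactly what you suggest, rewriting \eqref{compat} as $(T_1\star_2 T_2)\star_1 T_3-T_1\star_2(T_2\star_1 T_3)=T_1\star_1 (T_2\star_2 T_3)-(T_1\star_1 T_2)\star_2 T_3$ and identifying both sides as the nonnegative sum over those graftings in which some component of $T_1$ lands on a leaf of $T_3$.

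The genuine gap is Step 3, which you yourself flag as ``where the bulk of the work lies'' but do not carry out, and the mechanism you gesture at will not close up as stated. First, $\Phi(s\star_1 x)$ alone cannot produce a tree whose root has a single child: $\star_1$ always contains the summand in which everything is grafted at the root of the right-hand factor, and no single statistic such as ``depth of the grafted component'' lets you peel that term off by a downward induction --- you must use the \emph{difference} of the two products. The paper's actual argument is a double induction (on degree, then on the number of children of the root) built on two identities: if the root of $T$ has one child labelled $s$ with subtree $T'$ hanging below it, then $T'\star_1 s - T'\star_2 s$ equals $T$ plus trees whose root has \emph{more} children (the mixed graftings); if the root has $m\ge 2$ children, write $T=T[1]\cdot T''$ with $T[1]$ the first branch, and $T[1]\star_1 T''$ equals $T$ plus trees whose root has \emph{fewer} children. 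Note that the correction terms in the two cases move in opposite directions along the ``number of root children'' statistic, so even the paper's bookkeeping has to be handled with care (in low degree one ends up solving a small triangular system mixing $\star_1$- and $\star_2$-products rather than running a one-directional induction). This is precisely the point your sketch leaves open: you would need to exhibit an explicit well-founded order on $\RT(S)$ and verify that the specific correction terms of $\star_1$ and $\star_2$ (not of a generic ``grafting sum'') decrease with respect to it. Until that is done, surjectivity --- and hence part (2) --- is not established.
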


\begin{example}
For the trees 
 $$
\includegraphics{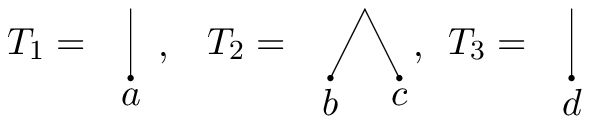}
 $$ 
the four products that occur in the compatibility relation~\ref{compat} are as follows. The product $T_1\star_1(T_2\star_2 T_3)$ is
 $$
\includegraphics[scale=0.7]{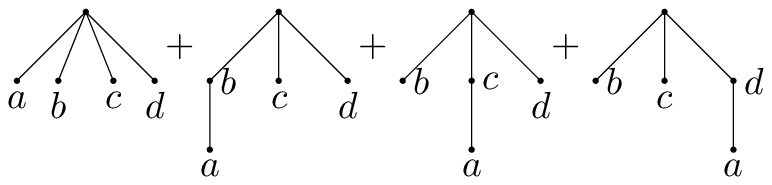}
 $$ 
the product $T_1\star_2(T_2\star_1 T_3)$ is
 $$
\includegraphics[scale=0.7]{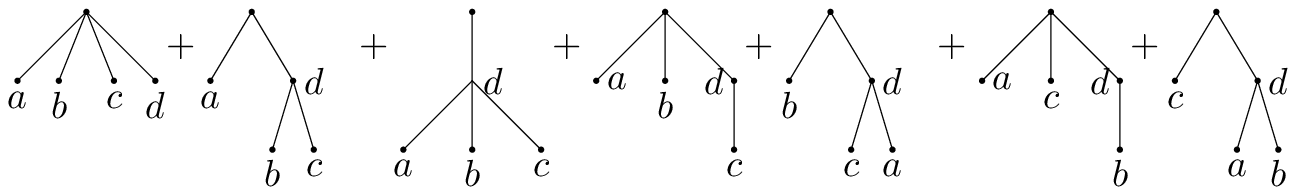}
 $$ 
the product $(T_1\star_2 T_2)\star_1 T_3$ is
 $$
\includegraphics[scale=0.7]{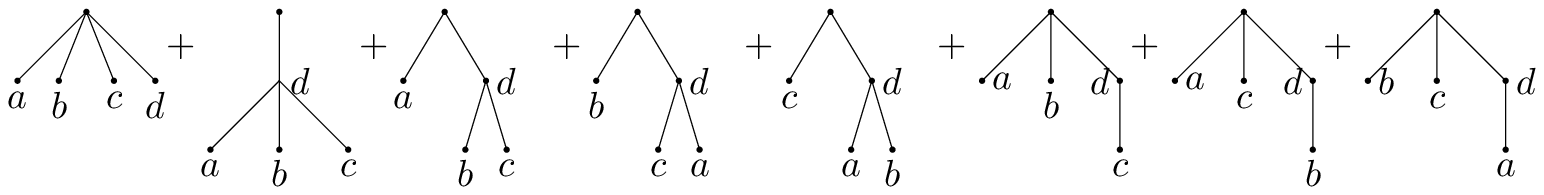}
 $$ 
and, finally, the product $(T_1\star_1 T_2)\star_2 T_3$ is
 $$
\includegraphics[scale=0.7]{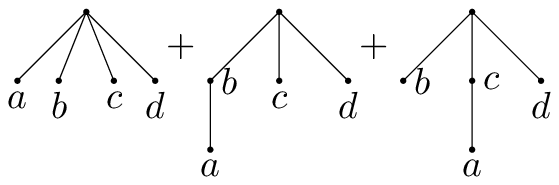}
 $$ 
and the compatibility condition is satisfied.
\end{example}

\begin{proof}
The associativity conditions for both products are pretty transparent; to show that all the terms in consecutive product $\Pi_1=T_1\star_1 (T_2\star_1 T_3)$ appear in the product $\Pi_2=(T_1\star_1 T_2)\star_1 T_3$ one should just notice that to obtain the terms in $\Pi_1$ where subtrees of $T_1$ are attached directly to vertices of $T_3$ (all other terms appear in $\Pi_2$ for tautological reasons) we should just attach the corresponding subtrees to the root of $T_2$ when computing $T_1\star_1 T_2$ for $\Pi_2$, then we can attach them as required when computing the final product. The same argument works for the second product. 

We shall establish the compatibility condition rewritten in the form 
 $$
(T_1\star_2 T_2)\star_1 T_3-T_1\star_2(T_2\star_1 T_3)=T_1\star_1 (T_2\star_2 T_3)-(T_1\star_1 T_2)\star_2 T_3.
 $$
The reason is that for our products both the left hand side and the right hand side are combinations of trees with nonnegative coefficients, and we can interpret the summands that appear there in a rather nice and simple way. Namely, the trees that appear on the left hand side are those for which there exist subtrees of $T_1$ that are attached to some leaves of $T_3$. Obviously, the left hand side has the same interpretation. Details are simple and we leave them to the reader.

Now we shall prove that the algebra $\mathbb{Q}\RT(S)$ is free as an $\As^2$-algebra. Note that this algebra admits a natural grading by the number of non-root vertices of a tree, and the dimensions of graded components are precisely the dimensions of the graded components of the free $\As^2$-algebra generated by $S$. It remains to show that our algebra is generated as an $\As^2$-algebra by elements of degree~$1$; it will follow that it is a quotient of the corresponding free algebra, and since it has the same dimensions of graded components, these two algebras should be isomorphic. Thus, it remains to prove the following lemma.
\begin{lemma}
As an $\As^2$-algebra, $\mathbb{Q}\RT(S)$ is generated by elements of degree~$1$. 
\end{lemma}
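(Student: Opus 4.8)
The plan is to prove the lemma by induction on the number $k$ of non-root vertices of a tree, showing that every $T\in\RT(S)$ with $k$ non-root vertices lies in the subalgebra $B\subseteq\mathbb{Q}\RT(S)$ generated by the degree-one trees; a degree-one tree is some $g_s$, a root with a single child that is a leaf labelled by $s\in S$. The case $k=1$ is the definition of $B$, so fix $k\ge2$ and assume every tree of degree $<k$ lies in $B$. Two elementary facts about the products drive the argument. First, $g_s\star_1 T$ is the sum, over all vertices $v$ of $T$, of the tree obtained from $T$ by attaching a new leaf labelled $s$ as the leftmost child of $v$. Second, if the root of a tree $U$ has $p\ge1$ branches $U[1],\dots,U[p]$, then $U\star_2 g_s$ is a single tree, because the root is the only internal vertex of $g_s$: it is the tree in which $U[1],\dots,U[p]$ become the leftmost children of the root of $g_s$, the leaf of $g_s$ remaining as the rightmost child.

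I would then run the inductive step by cases on the number $m$ of children of the root of $T$, written $T=T[1]\cdot\ldots\cdot T[m]$. If $m\ge2$ and the last child $v_m$ is a leaf labelled $s$, then $T=(T[1]\cdot\ldots\cdot T[m-1])\star_2 g_s$ is a single term; as $T[1]\cdot\ldots\cdot T[m-1]$ has degree $k-1$ and lies in $B$, so does $T$. In the remaining cases — $m=1$, or $m\ge2$ with $v_m$ not a leaf — let $v$ be the last child of the root, let $U_1,\dots,U_p$ (with $p\ge1$) be the subtrees growing from $v$, let $T^\flat$ be $T$ with $U_1,\dots,U_p$ deleted (so that $v$ is a leaf of $T^\flat$), and let $W$ be the tree obtained by attaching $U_1,\dots,U_p$, in order, below a single new root. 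Both $T^\flat$ and $W$ have degree $<k$ and hence lie in $B$, and $T$ is the unique term of $W\star_1 T^\flat$ in which every branch of $W$ is grafted onto $v$. The calculation to verify is that every other term of $W\star_1 T^\flat$ is a degree-$k$ tree whose root has strictly more than $m$ children (the terms in which some branch of $W$ lands on the root of $T^\flat$), or whose root has exactly $m$ children but whose last root-subtree is strictly smaller than that of $T$; when $m=1$ this says that every other term has a root with at least two children.

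To make the induction close within a fixed degree $k$, I would process the degree-$k$ trees in order of decreasing number of root-children, breaking ties by increasing size of the last root-subtree, and treating the $m=1$ trees last. With this order the identities above express each $T$ as a product of two trees of degree $<k$ (in $B$ by the outer induction) minus a combination of degree-$k$ trees that come strictly earlier in the order (in $B$ by the inner induction), the base case of the inner induction being exactly the ``$v_m$ is a leaf'' case, and the $m=1$ trees drawing only on $m\ge2$ trees of the same degree. This yields $B=\mathbb{Q}\RT(S)$, which is the lemma and, by the remarks preceding it, completes Theorem~\ref{treefreealg}. The step I expect to be the main obstacle is precisely this last one: in isolation, the natural construction of a tree whose root has a single child produces correction terms whose roots have two or more children, while the natural construction of the latter produces trees whose roots have a single child, so no plain induction on $m$ can work; introducing the size of the last root-subtree as a secondary parameter, together with the stated processing order, is the device that breaks the circularity.
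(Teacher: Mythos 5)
Your proof is correct, and it follows the same broad strategy as the paper's --- an outer induction on the degree combined with an inner induction on the shape of the root --- but the specific reductions and the ordering of the inner induction are genuinely different, and in fact more careful than what the paper writes. The paper peels off the \emph{first} root-branch: for a root with $m\ge 2$ children it writes $T=T'\cdot T''$ and extracts $T$ from $T'\star_1 T''$ modulo trees with $m-1$ root-children, while for $m=1$ it asserts $T=T'\star_1 s-T'\star_2 s$. That identity is exact only when the unique root-child has at most one child of its own; in general $T'\star_1 s-T'\star_2 s$ equals $T$ plus trees with at least two root-children, so the paper's inner induction (downward to the base case $m=1$, whose own reduction points back up to $m\ge 2$) is circular in exactly the way you predict in your final paragraph. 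Your version avoids this: you detach the children of the \emph{last} root-child, observe that the correction terms in $W\star_1 T^\flat$ either increase $m$ or keep $m$ while strictly shrinking the last root-subtree, and anchor everything at the correction-free identity $T=(T[1]\cdot\ldots\cdot T[m-1])\star_2 g_s$ when the last child is a leaf. Processing degree-$k$ trees by decreasing $m$, refined by your secondary parameter, then makes the induction well-founded, and the $m=1$ trees --- whose corrections all have $m\ge 2$ --- can safely go last. So your proposal is not merely an independent verification but a repair of the published argument; the only cost is the extra bookkeeping of the two-parameter order, which the paper's terser version omits at the price of a genuine gap.
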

\begin{proof}
We use induction on degree. Assume that all trees of degree at most $k$ are products of elements of degree~$1$. Let us show that the same holds for trees of degree $k+1$. For the fixed degree, we use induction on the number of children of the root vertex. First of all, if the root of a tree has only one child, the label of this child is $s$, and the subtree obtained by forgetting about the root and about the label of the child is $T'$, then our tree is $T'\star_1 s-T'\star_2 s$, where $s$ denotes the tree with two vertices whose non-root vertex is labelled by~$s$, so we can proceed by induction on degree. Assume that the root has at least two children; then $T=T'\cdot T''$, where the root of $T'$ has only one child. Let us compute $T'\star_1 T''$. This product contains $T$ as a summand; for all other summands the root of the result has fewer children than~$T$ (so we can proceed by induction).
\end{proof}
\end{proof}

\begin{remark}
Theorem \ref{treefreealg} can be used to obtain an alternative proof of one of the main results of the article \cite{GL}. Namely, since the first product $T_1\star_1 T_2$ is the Grossman--Larson product on $\mathbb{Q}\RT(S)$, it follows from Theorem \ref{freearefree} that the Grossman--Larson algebra of planar rooted trees is a free associative algebra; moreover, from our proofs it is easy to see that as a generating set of this algebra we can take all trees whose root has only one child. These are exactly the results of Grossman and Larson. 
\end{remark}

\section{Appendix: remarks and open questions}\label{conj}

\subsection{Relation to Grossman--Larson Hopf algebra structure}
Recall the Grossman--Larson algebra of planar rooted trees from \cite{GL} is originally defined as a Hopf algebra, with the coproduct defined as follows. 
\begin{definition}[\cite{GL}]
Define the coproduct 
 $$
\Delta\colon\mathbb{Q}\RT(S)\to\mathbb{Q}\RT(S)\otimes\mathbb{Q}\RT(S) 
 $$
by the formula
 $$
\Delta(T)=\sum_{I\sqcup J=[k]}T[i_1]\cdot\ldots\cdot T[i_p]\otimes T[j_1]\cdot\ldots\cdot T[j_q],
 $$
where $T=T[1]\cdot\ldots\cdot T[k]\in\mathbb{Q}\RT(S)$, $I=\{i_1<\ldots<i_p\}$, $J=\{j_1<\ldots<j_q\}$. 
\end{definition}
One could ask what is the relation between this coproduct and the second product that we introduced. 

\begin{proposition}
Consider $\mathbb{Q}\RT(S)$ as an associative algebra with respect to either of the products $\star_1$, $\star_2$ (and introduce the product on its tensor square accordingly). Then $\Delta$ is an algebra homomorphism.  
\end{proposition}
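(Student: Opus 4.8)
The plan is to check the identity $\Delta(T_1\star_a T_2)=\Delta(T_1)\star_a\Delta(T_2)$ directly for $a\in\{1,2\}$ and a pair of basis trees $T_1,T_2\in\RT(S)$, where the product on $\mathbb{Q}\RT(S)\otimes\mathbb{Q}\RT(S)$ is the componentwise one, $(u\otimes v)\star_a(u'\otimes v')=(u\star_a u')\otimes(v\star_a v')$; by bilinearity of the products and linearity of $\Delta$ this suffices. The proof will ultimately be an explicit bijection between the indexing sets of the two sides, so I would first record the combinatorial shape of the ingredients.

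First I would unwind the products in a uniform way. Writing $T_1=T_1[1]\cdot\ldots\cdot T_1[k]$ and, for $I\subseteq[k]$, $T_1^{I}=\prod_{i\in I}T_1[i]$ (and similarly $T_2=T_2[1]\cdot\ldots\cdot T_2[m]$, $T_2^{J}=\prod_{j\in J}T_2[j]$), both products have the form $T_1\star_a T_2=\sum_{\phi\colon[k]\to W_a(T_2)}T_1\circ^{\phi}T_2$, with $W_1(T_2)=\Vertices(T_2)$, $W_2(T_2)=\Int(T_2)$, and $T_1\circ^{\phi}T_2$ grafting the corollas $T_1[i]$ with $\phi(i)=v$ onto $v$ in increasing order of $i$ and to the left of the existing children of $v$. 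The key is to read off the $\cdot$-decomposition of the result. For $a=2$ the root of $T_2$ is never used, so $T_1\circ_g T_2=R[1]\cdot\ldots\cdot R[m]$, where $R[j]$ is $T_2[j]$ with the corollas $T_1[i]$, $g(i)\in\Int(T_2[j])$, grafted into its interior; each $R[j]$ again has its root with a single child, so this is a genuine $\cdot$-decomposition. For $a=1$ one gets $T_1\circ^{f}T_2=\bigl(\prod_{i\in f^{-1}(r)}T_1[i]\bigr)\cdot R[1]\cdot\ldots\cdot R[m]$, with $r$ the root of $T_2$ and $R[j]$ as above. Dually, $\Delta$ is the unshuffle coproduct on the $\cdot$-decomposition: $\Delta(T)=\sum_{I\sqcup J=[\ell]}T^{I}\otimes T^{J}$ for a tree $T$ with $\ell$ corollas.

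Then I would assemble the bijection, doing $a=2$ first because it is cleaner. Expanding, the term of $\Delta(T_1\star_2 T_2)$ attached to a pair $(g\colon[k]\to\Int(T_2),\ A\sqcup B=[m])$ is $\bigl(\prod_{j\in A}R[j]\bigr)\otimes\bigl(\prod_{j\in B}R[j]\bigr)$, while the right-hand side is indexed by $(C\sqcup D=[k],\ A\sqcup B=[m],\ h\colon C\to\Int(T_2^{A}),\ h'\colon D\to\Int(T_2^{B}))$ with term $(T_1^{C}\circ_{h}T_2^{A})\otimes(T_1^{D}\circ_{h'}T_2^{B})$. The matching rests on the fact that internal non-root vertices are a local notion, hence additive under $\cdot$: $\Int(T_2)=\Int(T_2^{A})\sqcup\Int(T_2^{B})$ for any splitting $A\sqcup B=[m]$. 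One sends $(g,A,B)\mapsto\bigl(C=g^{-1}(\Int(T_2^{A})),\ D=g^{-1}(\Int(T_2^{B})),\ h=g|_{C},\ h'=g|_{D}\bigr)$, checks that $\prod_{j\in A}R[j]=T_1^{C}\circ_{h}T_2^{A}$ (both are $T_2^{A}$ with exactly the corollas $T_1[i]$, $g(i)\in\Int(T_2^{A})$, grafted in the same order), and likewise for $B$; the inverse glues $h$ and $h'$ into a single $g$. For $a=1$ the scheme is identical except for the shared root $r$: a grafting $f$ over $T_2$ together with a splitting of the $\cdot$-decomposition of $T_1\circ^{f}T_2$ amounts to choosing $A\sqcup B=[m]$, distributing the corollas $T_1[i]$ with $f(i)=r$ between the two tensor factors (these are exactly the corollas that sit at the front of the decomposition of the result), and sending each remaining corolla to whichever of $T_2^{A}$, $T_2^{B}$ contains its target --- and this is precisely the data on the right, via $\Vertices(T_2)=\{r\}\sqcup(\Vertices(T_2^{A})\setminus\{r\})\sqcup(\Vertices(T_2^{B})\setminus\{r\})$. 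The degenerate cases involving the one-vertex tree (which enters only through the extreme terms of $\Delta$) are checked by inspection.

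The step I expect to be the main obstacle is pinning down the ordering bookkeeping in the $a=1$ case: one must verify that the corollas grafted at the root of $T_2$ really do occur, as a block and in index order, at the very beginning of the $\cdot$-decomposition of $T_1\circ^{f}T_2$, so that an order-preserving coproduct split of that decomposition corresponds on the operadic side to grafting those chosen corollas at the root of $T_2^{A}$ (resp.\ $T_2^{B}$) \emph{to the left} of everything else --- i.e.\ that the ``place new children to the left'' convention of $\circ^{f}$ and the order-preservation built into $\Delta$ are mutually consistent, so that the two trees agree as planar (not merely abstract) trees. Once this is settled, both directions of the bijection are routine, and the proposition follows.
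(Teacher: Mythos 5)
Your strategy (a direct bijection of indexing sets, rather than the paper's appeal to Grossman--Larson for $\star_1$ and an analogous modification for $\star_2$) is legitimate, and your treatment of the case $a=1$ is correct, including the ordering point you flag: the corollas sent to the root of $T_2$ do form an initial block, in index order, of the $\cdot$-decomposition of $T_1\circ^f T_2$, so the unshuffle of that decomposition matches the data $(C\sqcup D,\ A\sqcup B,\ h,\ h')$ on the right-hand side.

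The gap is in the case $a=2$, and it comes from a misidentification of $\Int(T)$. You take ``internal'' to mean ``non-root'', so that the root of $T_2$ is never a target of $g$ and $\Int(T_2)=\Int(T_2^A)\sqcup\Int(T_2^B)$. But $\Int(T)$ here must be the set of \emph{non-leaf} vertices, with the root included. You can see this in two ways. First, with your reading $\star_2$ is not even associative: for the two-vertex tree $s$, your $s\star_2 s$ is the three-vertex path $P$, your $(s\star_2 s)\star_2 s$ is the single four-vertex path, while $s\star_2(s\star_2 s)=s\star_2 P$ is a sum of two distinct trees (the corolla of $s$ may be grafted at either non-root vertex of $P$). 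Second, Theorem 5 requires $\mathbb{Q}\RT(\{s\})$ to be free on one generator, hence of dimension $c_2=2$ in degree $2$ with $s\star_1 s$ and $s\star_2 s$ linearly independent; this forces $s\star_2 s$ to be the corolla with two leaves, i.e.\ the root of $s$ must lie in $\Int(s)$. (One also checks that with $\Int=$ non-leaf both associativity and the four-term relation hold on these small examples.) Consequently two steps of your $a=2$ argument fail as written: the $\cdot$-decomposition of $T_1\circ_g T_2$ is not $R[1]\cdots R[m]$ but $\bigl(\prod_{i\in g^{-1}(r)}T_1[i]\bigr)\cdot R[1]\cdots R[m]$, exactly as for $a=1$; and $\Int(T_2)\ne\Int(T_2^A)\sqcup\Int(T_2^B)$, since the root is counted once on the left and belongs to both $\Int(T_2^A)$ and $\Int(T_2^B)$ when $A,B\ne\emptyset$ (and to neither when one of them is empty). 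The repair is to run the $a=2$ case verbatim like your $a=1$ case, using $\Int(T_2)=\{r\}\sqcup\bigl(\Int(T_2^A)\setminus\{r\}\bigr)\sqcup\bigl(\Int(T_2^B)\setminus\{r\}\bigr)$ and distributing the root-grafted corollas between the two tensor factors; the genuine difference between $\star_1$ and $\star_2$ is the exclusion of the \emph{leaves}, not of the root --- which is precisely the paper's remark that the $\star_2$ case is the Grossman--Larson argument with all summands involving grafting onto leaves discarded.
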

\begin{proof}
For the first product, this statement is proved in~\cite{GL}. For the second product, one can use the same proof with some slight modifications (basically, what should be done is simply forgetting all summands where grafting to leaves occurs).  
\end{proof}

\begin{remark}
It is worth mentioning that although the tensor product of two $\As^2$-algebras can be turned into an $\As^2$-algebra in many different ways, two products on the tensor square of the free algebra that we just described are not compatible; the family of products 
 $$
(a_1\otimes b_1)\star_{\lambda,\mu}(b_1\otimes b_2)=
(\lambda a_1\star_1 a_2+\mu a_1\star_2 a_2)\otimes (\lambda b_1\star_1 b_2+\mu b_1\star_2 b_2)
 $$
is a pencil of associative products, but it is not a linear pencil anymore (they rather resemble pencils of associative products from~\cite{Mo}). Thus, the relationship between Hopf algebra structure and the structure of an algebra with two compatible products is yet to be clarified.
\end{remark}

\subsection{Relation to other operads realised by planar trees}

\begin{remark}[\cite{Lo_pc}]
Consider the operad $\calO_q$ generated by two binary operations $\circ$ and $\bullet$ which satisfy the relations
\begin{gather*}
(x\circ y)\circ z=x\circ(y\circ z),\\
(x\bullet y)\circ z + q (x\circ y)\bullet z = x\bullet (y\circ z) + q\ x\circ (y\bullet z),\\
(x\bullet y)\bullet z=x\bullet(y\bullet z).
\end{gather*}
Then $\calO_0$ is the operad $Dup$ of duplicial algebras~\cite{Lo}, while $\calO_1$ is the operad $\As_2$.

Also, consider the operad $\calP_t$ generated by two binary operations $\prec$ and $\succ$ which satisfy the relations
\begin{gather*}
 (x\prec y)\prec z = x\prec (y\prec z) + t\ x\prec (y\succ z),\\
 (x\succ y)\prec z = x\succ (y\prec z),\\
 (x\succ y)\succ z + t (x\prec y)\succ z= x\succ (y\succ z).
\end{gather*}
Then $\calP_0$ is the operad $Dup$, while $\calP_1$ is the operad $Dend$ of dendriform algebras~\cite{Lo}.
\end{remark}

It is known that free algebras over $Dend$ and $Dup$ can be realised by planar trees. It would be interesting to define in a pure combinatorial way a $2$-parameter family of pairs of binary operations $\star_{1,q,t}$ and $\star_{2,q,t}$ on $\mathbb{Q}\RT(S)$ which have correct specialisations to $q=t=0$ (duplicial case) $q=1$, $t=0$ (compatible associative products) and $q=0$, $t=1$ (dendriform case).

\subsection{Other Hopf-algebraic families of trees}

Some general phenomenon that we think is worth mentioning here is the existence of compatible associative products for many other well known algebras where the product is described via combinatorics of trees. The main idea is very simple. If the product in the linear span of rooted trees (planar or not) is defined for two trees 
$T_1=T_1[1]\cdot T_1[2]\cdot\ldots\cdot T_1[k]$ and $T_2$ as the sum of all graftings of some type of trees $T_1[i]$ to vertices of the tree $T_2$, then another product over all graftings of the same type but only to internal vertices is compatible with the first product. For algebras of planar \emph{binary} trees (which also often occur in literature) an analogous recipe holds: if a product is defined in terms of graftings, then graftings only to the ``left-going'' leaves produce a compatible product.

\begin{example}
Consider the Connes--Kreimer Hopf algebra of renormalisation \cite{CK}, which is a polynomial algebra on (abstract) rooted trees, or, in other words, an algebra on the linear span of forests of rooted trees. If we take its dual, and identify the dual of each forest with the rooted tree having all trees of the forest grafted at its root vertex, the coproduct of Connes and Kreimer yields a product on the linear span of rooted trees which is defined in terms of graftings as above. Thus, this algebra is naturally endowed with another product which is compatible with the original one.
\end{example}

\begin{example}
Similarly, consider the noncommutative Connes--Kreimer Hopf algebra $NCK$ of Foissy~\cite{Foi}, which is a free associative algebra on planar rooted trees, or, in other words, an algebra on the linear span of (ordered) forests of planar rooted trees. If we take its dual, and identify the dual of each forest with the planar rooted tree having all trees of the forest grafted at its root vertex, the coproduct of Foissy leads to another product on $\mathbb{Q}\RT(S)$ which is again defined in terms of graftings. It follows that $NCK$ has a natural structure of an algebra with two compatible products. Results of Foissy~\cite{Foi} on isomorphisms of Hopf algebras also produce compatible products on some other algebras on trees, for example, the Brouder--Frabetti Hopf algebra of renormalization in QED~\cite{BF}.
\end{example}

One can easily check that unlike the case of the Grossman--Larson product, the dual of the Foissy algebra is not a free algebra with two compatible products; for example, the $\star_2$-subalgebra of $NCK$ generated by elements of degree~$1$ (i.e., trees with one leaf) is commutative. We expect that this is in some sense the only obstruction to freeness. 
\begin{conjecture}
$\As^2$-subalgebra of $NCK$ generated by elements of degree~$1$ is a free algebra over the operad of two compatible associative products one of which is, in addition, commutative.
\end{conjecture}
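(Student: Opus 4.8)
The plan is to reproduce the route by which Theorem~\ref{treefreealg} was established, combining it with the Koszul-duality machinery of Section~\ref{chi-calc} and the monomial-basis and Chapoton-type arguments of Sections~\ref{monomial}--\ref{freealg}. Write $\calQ$ for the operad with two binary generators, $\star_1$ carrying no symmetry and $\star_2$ symmetric (the commutative product), subject to the two associativity relations and the four-term relation~\eqref{compat}. First I would analyse $\calQ$ via quadratic duality: computing $\calQ^!$ is a finite linear-algebra problem in the space of ternary compositions of the generators, and, just as for $\As^2$, I expect $\calQ^!$ to be a small and transparent operad, built from the self-dual part contributed by $\star_1$ and the $\Com^!=\Lie$ part contributed by the commutative $\star_2$. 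Next I would prove that $\calQ$ is Koszul; the cleanest route is a Gröbner-basis / rewriting argument for operads in the spirit of \cite{DK} and of the spanning argument of Section~\ref{monomial}, which would simultaneously yield a quadratic Gröbner basis (hence Koszulness) and an explicit monomial basis of the free $\calQ$-algebra --- note that commutativity of $\star_2$ merely lets one normalise the order of the $\star_2$-factors of a monomial. From $\calQ^!$ and the functional equations of \cite{GK,DK} one then reads off $f_\calQ(x)$ and, by Remark~\ref{specialisations}(3) (as in Corollary~\ref{dimfreealg}), the dimensions $\dim\calQ(\mathbb{Q}S)_k$ of the graded components of the free $\calQ$-algebra on a set $S$.

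Let $B\subseteq\mathbb{Q}\RT(S)$ denote the $\As^2$-subalgebra generated in degree~$1$, where $\star_1$ is the dualised Foissy grafting product of \cite{Foi} and $\star_2$ is the associated grafting-to-internal-vertices product. Since $\star_1$ and $\star_2$ are associative and satisfy~\eqref{compat} on all of $\mathbb{Q}\RT(S)$, endowing $B$ with the structure of a $\calQ$-algebra amounts to upgrading the observation preceding the conjecture --- that $\star_2$ is commutative on the $\star_2$-subalgebra generated in degree~$1$ --- to the statement that $\star_2$ is commutative on the whole of $B$. The natural way to do this, and at the same time to prepare the dimension count, is to exhibit an explicit combinatorial model: a subclass $\mathcal{T}(S)\subseteq\RT(S)$, built from the degree-$1$ ``ladders'' by repeated grafting, that is visibly closed under $\star_1$ and $\star_2$ and on which $\star_2$ acts symmetrically, together with an identification $\mathbb{Q}\mathcal{T}(S)\cong B$. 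Once $B$ is known to be a $\calQ$-algebra generated in degree~$1$, there is a canonical surjection of $\calQ$-algebras $\theta\colon\calQ(\mathbb{Q}S)\twoheadrightarrow B$.

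It then remains to show that $\theta$ is an isomorphism, i.e. that $\dim B_k=\dim\calQ(\mathbb{Q}S)_k$ for all $k$ --- more precisely that the graded characters, tracking the $S$-labels as well as the operadic degree, agree, exactly as in the proof of Theorem~\ref{treefreealg}. The inequality $\dim B_k\le\dim\calQ(\mathbb{Q}S)_k$ is automatic from the surjection, while the reverse inequality follows once the model $\mathcal{T}(S)$ of the previous paragraph is shown to have precisely $\dim\calQ(\mathbb{Q}S)_k$ elements of degree~$k$, since distinct planar labelled trees are linearly independent in $\mathbb{Q}\RT(S)$. Alternatively one may argue as in Section~\ref{freealg}: show that $(B,\star_1)$ is free as an associative algebra by a Chapoton-type argument in which the commutative product $\star_2$ controls the defect, identify the space of generators, and match the resulting Hilbert series against $f_\calQ$. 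Either way the conclusion is that $B$ is free over $\calQ$.

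The main obstacle is the combinatorial description of $B$ in the second step --- equivalently, the lower bound $\dim B_k\ge\dim\calQ(\mathbb{Q}S)_k$ in the third. Unlike the Grossman--Larson situation of Theorem~\ref{treefreealg}, where the whole span $\mathbb{Q}\RT(S)$ turned out to be free, here $B$ is a \emph{proper} subalgebra of the dual of $NCK$ --- its graded dimensions lie strictly below the Catalan numbers $c_k(\#S)^k$ --- and there is no evident closed subclass of planar trees realising it; pinning down exactly which trees, or which linear combinations of trees, lie in $B$ (or else pushing through the associative-freeness argument for $(B,\star_1)$ and computing the generating series of its generators) is where the real work lies. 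A secondary and more mechanical difficulty is the Koszulness of $\calQ$ in the first step: the Gröbner-basis computation is routine in principle, but one must verify that the overlaps between the commutativity of $\star_2$ and the four-term relation~\eqref{compat} resolve.
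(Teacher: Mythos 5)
This statement is stated in the paper as a \emph{conjecture}; the paper offers no proof of it, so there is nothing to compare your argument against except the paper's own surrounding remarks --- and those remarks directly contradict the first step of your plan. You propose to establish Koszulness of the operad $\calQ$ (one associative product, one commutative associative product, compatible) via a quadratic Gr\"obner basis, and then to read off the graded dimensions of the free $\calQ$-algebra from the functional equation of \cite{GK,DK}. But the paper explicitly states, immediately after the conjecture, that this operad ``is not Koszul, and not much is known about the growth of dimensions of its components.'' So the overlaps you flag as a ``secondary and more mechanical difficulty'' --- between the commutativity of $\star_2$ and the four-term relation \eqref{compat} --- do \emph{not} resolve, there is no quadratic Gr\"obner basis, and the Ginzburg--Kapranov functional equation gives no information about $f_\calQ$. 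Since your entire strategy for the upper half of the dimension count ($\dim\calQ(\mathbb{Q}S)_k$) rests on this, the plan collapses at step one; there is at present no known closed form for these dimensions to match anything against.

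The other half of your argument is also not a proof but a statement of what would need to be proved: you acknowledge that identifying the subalgebra $B\subseteq\mathbb{Q}\RT(S)$ combinatorially (or establishing the lower bound $\dim B_k\ge\dim\calQ(\mathbb{Q}S)_k$) is ``where the real work lies,'' and you do not carry it out. You do correctly locate the two genuine difficulties --- the failure of the $\As^2$-freeness mechanism here is precisely why the paper leaves this as a conjecture rather than a theorem --- but locating them is not resolving them. In short: this is a reasonable research programme marred by one outright false ingredient (Koszulness of $\calQ$), and with its two essential steps left open; it is not a proof, and the statement remains open in the paper as well.
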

The operad that shows up here does not seem to have many good properties. In particular, it is not Koszul, and not much is known about the growth of dimensions of its components.

\subsection{General compatible structures}
It is natural to ask which of our results have analogues for other operads of compatible structures (for the formal definition of compatible $\calO$-structures for any operad~$\calO$, see \cite{Stro}). For the operad of compatible Lie brackets, we can prove an exact analogue of Theorem \ref{freearefree}: free algebras over that operad are free as Lie algebras. The proof is similar to the proof for compatible associative structures and also makes use of an appropriate monomial basis. We expect that actually both of these statements are particular cases of a very general theorem. 

The following conjecture consists of two parts. The first part generalises the main theorem of \cite{Stro}, while the second one suggests that our theorem also holds in that generality.

\begin{conjecture}
\begin{enumerate}
 \item Let $\calO$ be a Koszul operad. Then the operad $\calO^2$ of two compatible $\calO$-structures is also Koszul.
 \item If the operad $\calO$ is Koszul, then free $\calO^2$-algebras are free as $\calO$-algebras.
\end{enumerate}
\end{conjecture}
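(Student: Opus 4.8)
The plan is to reduce both statements to a single structural input: a uniform quadratic presentation of $\calO^2$ together with a description of its Koszul dual. Since $\calO$ is Koszul it is in particular quadratic, $\calO=\calF(E)/(R)$; then $\calO^2$ is quadratic as well, generated by two copies of $E$ carrying ``colours'' $1$ and $2$, with relations the colour-$1$ copy of $R$, the colour-$2$ copy of $R$, and the \emph{polarisation} $R_{12}$ of $R$ --- the relations forced by requiring $\lambda_1(\text{colour }1)+\lambda_2(\text{colour }2)$ to satisfy $R$ for every $(\lambda_1,\lambda_2)$; concretely, each weight-two relation $r\in R$, being linear in each of its two generator-slots, contributes the relation obtained by colouring those two slots $1$ and $2$ and symmetrising. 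For $\calO=\As$ this reproduces exactly the two associativities together with \eqref{compat}, the latter being the polarisation of $(ab)c=a(bc)$. Dualising this presentation yields one for $(\calO^2)^!$, and the expected outcome --- which I would pin down first, generalising the identity $(\As^2)^!(n)=\mathbb{Q}S_n\otimes L(n-1)$ of this paper and the compatible-Lie computation of \cite{DK} --- is that $(\calO^2)^!$ is the arity-wise tensor (Hadamard) product $\calO^!\otimes_{\mathrm H}\calK$, where $\calK(n)=S^{n-1}(\k^2)$ carries the trivial $S_n$-action, the $SL_2$-action through $\k^2$, and composition given by multiplication in the symmetric algebra; equivalently, $\calO^2$ is obtained from $\calO$ by a Manin product with the small, explicit operad $\calK^!$.

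For part~(1) the natural first attempt is via Gröbner bases for shuffle operads: if $\calO$ admits a quadratic Gröbner basis of relations, order the two-coloured generators so that colour $1$ dominates colour $2$ and check that the $S$-polynomials produced by $R_{12}$ reduce --- morally, a confluent quadratic rewriting system for $\calO$ acquires two coloured shadows whose only interaction is governed by $R_{12}$, and confluence survives. (This is the operadic counterpart of the rewriting used in this paper to prove that $\mathfrak{B}(A)$ spans $\As^2(A)$, and of the compatible-Lie treatment in \cite{DK}.) That settles the conjecture for every $\calO$ with a quadratic Gröbner basis. A general Koszul $\calO$ need not have one, and then a more robust mechanism is required: either a preservation-of-Koszulness statement for the Manin product realising $\calO^2$ in terms of $\calO$ (such statements are available under mild hypotheses on one factor, which the auxiliary operad $\calK^!$ --- whose own Koszulness should be elementary --- ought to meet), or a direct construction of a small resolution of $\calO^2$, generalising Strohmayer's weighted-partition-poset model for $\As^2$: one would want to upgrade a partition-poset model for $\calO^!$ to a weighted analogue controlling $(\calO^2)^!$. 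Handling \emph{all} Koszul $\calO$ uniformly, rather than only the ones admitting a quadratic Gröbner basis, is where I expect the main difficulty to lie.

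Granting part~(1) --- or, in fact, only the character identity it produces --- part~(2) follows the argument of Theorem~\ref{freearefree} with essentially no change. Either inclusion $\calO\hookrightarrow\calO^2$ of a single colour makes $\calO^2$ an $\calO$-algebra in $\mathbb{S}\mathrm{-mod}$; Chapoton's theorem then gives a surjection $\theta\colon\calO(\calG)\twoheadrightarrow\calO^2$ with $\calG=H_0(\calO^2)$, and it remains to match graded characters, i.e.\ to prove $F_{\calO^2}=F_\calO\circ F_\calG$. Since $\calO^2_{\ge 1}$ is the span of the monomials whose root operation lies in the other colour, $\calG$ is the span of the monomials headed by the chosen colour; feeding the description of $(\calO^2)^!$ from the first step into the functional equation of \cite{DK} --- exactly as \eqref{FuncAs}--\eqref{as-circ-beta1} were used for $\As$ --- yields a functional equation for $F_{\calO^2}$ that exhibits it as $F_\calO$ plethystically composed with an explicit series, and identifying that series with $F_\calG$ completes the proof; for a vector space $V$ one then concludes $\calO^2(V)\simeq\calO(\calG(V))$, i.e.\ free $\calO^2$-algebras are free $\calO$-algebras. (Alternatively, part~(2) is the assertion that $\calO^2$ is free as a left module over a copy of $\calO$, which one could try to prove directly by a distributive-law/confluence argument --- but this again presupposes the control over $\calO^2$ gained in the first step, so parts~(1) and~(2) ultimately stand or fall together.)
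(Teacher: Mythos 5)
The statement you are addressing is stated in the paper as a \emph{conjecture}: the paper offers no proof of either part, and your text should be judged as a research programme rather than checked against an argument of the author's. As a programme it is sensible and well informed --- the polarised quadratic presentation of $\calO^2$, the expected identification of $(\calO^2)^!$ with a Hadamard product $\calO^!\otimes_{\mathrm H}\calK$ where $\calK(n)=S^{n-1}(\k^2)$ (consistent with $(\As^2)^!(n)=\mathbb{Q}S_n\otimes L(n-1)$ here and with the compatible-Lie computation in \cite{DK}), and the reduction of part~(2) to a character identity via Chapoton's theorem are all the right ingredients, and they do generalise the paper's treatment of $\As^2$ faithfully. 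But it is not a proof, and you say so yourself: for part~(1) your confluence argument only covers operads admitting a quadratic Gr\"obner basis, and you explicitly flag the general Koszul case as the open difficulty; even in the Gr\"obner case, "check that the $S$-polynomials produced by $R_{12}$ reduce" is an unverified claim, not an argument.

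For part~(2) there is a further gap beyond its dependence on part~(1). In the paper, the identity $f_{\As^2}=f_{\As}\circ\beta_1$ (equation \eqref{as-circ-beta1}) is \emph{not} extracted from the functional equation for $F_{\As^2}$; it is read off from the explicit monomial basis $\mathfrak{B}(A)$ and its partition into $t(b)=1$ and $t(b)=2$ parts, i.e.\ from a normal-form/rewriting argument. Your claim that the functional equation of \cite{DK} "exhibits $F_{\calO^2}$ as $F_\calO$ plethystically composed with an explicit series" is the crux, and it does not follow formally: the functional equation determines $F_{\calO^2}$ implicitly, and decomposing it as $F_\calO\circ F_\calG$ with $F_\calG$ the character of the degree-zero part of the Chapoton filtration requires constructing an analogue of $\mathfrak{B}(A)$ for general $\calO$ --- which again presupposes a confluent rewriting system, i.e.\ essentially the hypothesis you could not remove in part~(1). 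So both parts remain open, and the honest conclusion of your proposal is that the conjecture is proved only for Koszul operads admitting a quadratic Gr\"obner basis whose two-coloured polarised system can be shown to be confluent.
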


\end{document}